\newtheorem{theorem}{Theorem}
\newtheorem{lemma}{Lemma}
\newtheorem{definition}{Definition}
\newtheorem{property}{Property}
\newtheorem{proposition}{Proposition}
\newtheorem{corollary}{Corollary}
\newtheorem{remark}{Remark}
\newtheorem{example}{Example}
\newcommand{\T}{{C^\infty(S^2(\mathbb{R}^+, \mathbb{R}^+))}}
\newcommand{\ts}{\mathsf{T}}
\newcommand{\D}[1]{{\partial_{\vec{#1}}}}
\begin{document}
\title[Mean field information Gamma calculus on graphs]{Mean field information Hessian matrices on graphs}
\author[Li]{Wuchen Li}
\email{wuchen@mailbox.sc.edu}
\address{Department of Mathematics, University of South Carolina, Columbia, SC 29208.}
\author[Lu]{Linyuan Lu}
\email{lu@mailbox.sc.edu}
\address{Department of Mathematics, University of South Carolina, Columbia, SC 29208.}

\keywords{Spectral graph theory; Hessian matrix on graphs; Optimal transport; Gamma calculus; Transport information mean; Discrete Costa's entropy power inequalities.}
\thanks{W. Li thanks a start-up funding from the University of South Carolina. Both W. Li and L. Lu are also supported by NSF RTG: 2038080.}
\maketitle
\begin{abstract}
We derive mean-field information Hessian matrices on finite graphs. The ``information'' refers to entropy functions on the probability simplex. And the ``mean-field" means nonlinear weight functions of probabilities supported on graphs. These two concepts define a mean-field optimal transport type metric. In this metric space, we first derive Hessian matrices of energies on graphs, including linear, interaction energies, entropies. We name their smallest eigenvalues as mean-field Ricci curvature bounds on graphs. We next provide examples on two-point spaces and graph products. We last present several applications of the proposed matrices. E.g., we prove discrete Costa's entropy power inequalities on a two-point space. 
\end{abstract}
\section{Introduction}
Convexities of entropy functions play essential roles in differential geometry, probability, and information theory \cite{BE,Csiszar,vil2008}. It finds vast applications, such as studying or designing fast Markov-Chain-Monte-Carlo (MCMC) algorithms in Bayesian sampling and AI (Artificial Intelligence) inference problems \cite{Duncan}. 

The convexity of entropies is widely studied in probability space embedded with optimal transport metrics. It is useful in establishing information-theoretical inequalities, such as log-Sobolev \cite{Gross,WZ}, Poincar{\'e}, transport-information \cite{OV} and Costa’s entropy power inequalities \cite{Costa, vil2000}. The convexity depends on the Hessian operators of entropy, which forms a generalized Bakry-Emery Gamma calculus \cite{BE,BGL,BF}; see \cite{AGS, vil2008, Lott_Villani, strum} and \cite{LiHess, LiG, LiG1}. However, the classical Gamma calculus requires the sample space to be a continuous space, allowing high order calculus (integration by parts). This property is often missed in a discrete sample space. Recently, a class of {\em discrete optimal transport metrics} have been introduced in \cite{chow2012, EM1, M}. One can apply them in defining ``discrete Gamma calculus'' and Ricci curvature on graphs \cite{EM1, M}. Moreover, the Hessian operators can provide formalisms in establishing convergence rates of discrete-state Markov processes. However, Hessian operators of general energies w.r.t. mean-field optimal transport metrics on graphs are not clear \cite{GC}. 

In this paper, we study mean-field information Hessian matrices on a finite simple graph. And the Hessian matrices are formulated for general energies. Examples include linear, interaction energies, and entropies; see Theorem \ref{t2}. Using the spectral graph theory, we study some explicit bounds for the smallest eigenvalue of Hessian matrices, namely the ``mean-field Ricci curvature lower bound''. Furthermore, we present analytical lower bounds of mean-field Ricci curvature for two-point spaces and graph products. In applications, we demonstrate entropy dissipation properties and mean-field ``log-Sobolev'' inequalities on graphs. We also prove a Costa's entropy power inequality on a two-point graph. 

In literature, there are joint works on discrete Ricci curvatures \cite{Maas2012, EMR4, Jost2, Jost, LinYau2, LinYau, M, Ollivier_Ricci, WZ}; see many references therein. Technically speaking, our methods are closely related to \cite{LinYau2,LinYau} and \cite{Maas2012}. Compared to \cite{LinYau2,LinYau}, we consider a mean-field class of Gamma calculus on graphs, which depends on the functions of discrete probabilities (mean-field weight functions). See Theorem \ref{t2}. Meanwhile, compared to \cite{Maas2012}, we inherit and extend the Gamma calculus defined in \cite{Maas2012}. Firstly, we formulate Hessian matrices of general energies in probability simplex. Secondly, we define a ``transport information mean function'' based on the constant eigenvalue of Hessian operators. Lastly, we formulate analytical bounds for these Hessian matrices on graph products. In particular, we extend the tensor product property in \cite{Maas2012}, which works for the combination of Shannon entropy and logarithm mean function. In Corollary \ref{col3}, we demonstrate that this property works for general energy functions and weight functions. Besides the above comparisons, we apply the proposed Hessian operators to establish Costa's entropy power inequalities on graphs. We expect that our calculation will be useful in establishing analytical bounds for discrete information theory inequalities with applications in machine learning probability models, such as Boltzmann machine; see \cite{LiR}. 

We organize this paper below. In section \ref{section2}, we present the main result. We derive the Hessian matrix of energy functions w.r.t. mean-field optimal transport metrics on a graph. We define a mean-field Ricci curvature lower bound by the smallest eigenvalue of the proposed Hessian matrix. In section \ref{section3} and \ref{section4},  we derive analytical bounds for the Hessian matrices on a two-point space and a graph product. Finally, in section \ref{section6}, we present some applications of the proposed Hessian matrices, such as proving Costa's entropy power's inequalities on graphs. 

\section{Notations and main results}\label{section2}
In this section, we first present all notations, such as  mean-field-optimal-transport metric spaces on graphs. 
See their motivations in appendix. We next formulate the main result, which is the Hessian operators of general energies in the above metric space. Several examples of Hessian operators of energies, such as entropies, linear and interaction energies, are presented. 

\subsection{Notations} The logarithm $\log (x)$ is natural logarithm with base $e$. Let $\mathbb{R}^+$ denote the interval $[0,\infty)$ and 
$\mathbb{R}^{++}$ denote the open interval $(0,\infty)$.
Let $\T$ denote the set of function $\theta\colon \mathbb{R}^{+} \times \mathbb{R}^{+} \to \mathbb{R}^{+}$, such that
\begin{enumerate}
    \item (Regularity): $\theta$ is continuous on $\mathbb{R}^+
\times \mathbb{R}^+ $ and $C^\infty$ on
$(0,\infty) \times(0,\infty)$;
\item (Symmetry): 
$\theta(s,t)=\theta(t,s)$ for $s, t \geq 0$;
\item (Positivity): $\theta(s, t) > 0$ for $s, t > 0$.
\end{enumerate}

Let $G=(V,E)$ be a simple graph with vertex set $V$ and edge set $E$.
Without loss of generality, we often set $V=[n]=\{1,2,\ldots,n\}$, where 
$n$ is the number of vertices. 

A probability distribution on $V$ is 
a vector $p=(p_1, p_2,\ldots, p_n)$ with $p_i\geq 0$ for all $i$ and $\sum_{i=1}^n p_i=1$. The set of all probability distributions forms the standard simplex 
\begin{equation*}
    M=\{(p_i)_{i=1}^n\colon \sum_{i=1}^np_i=1,\quad p_i\geq 0\}\subset \mathbb{R}^n.
\end{equation*}
We can view $M$ as a manifold of dimension $n-1$ with boundary. 
The tangent bundle  $TM$ has a global trivialization with basis 
$e_i= \frac{\partial }{\partial p_i}-\frac{\partial }{\partial p_{i+1}}$
for $1\leq i \leq n-1$. Let $e^*_i$ ($1\leq i\leq n-1)$ be the dual basis in the cotangent bundle $T^*M$. From now on, we focus on the interior of the probability simplex $M$; see related studies on its boundary set in \cite{GLM}. 

For a simple graph $G$, we choose a function $\theta\in \T$ and associate
each edge $ij$ (and a point $p\in M$) with the expression $\theta_{ij}=\theta(p_i,p_j)$.
For a weighted graph $G$, 
we associate each edge $ij$ with the expression $\theta_{ij}=\theta_{ij}(p_i,p_j)$ for some function $\theta_{ij}\in \T$
where the choice of the function $\theta_{ij}$ depending on the edge weight
$w_{ij}$. 

It is convenient to extend to all pair of vertices by setting $\theta_{ij}=0$ for all non-edges $ij$ and $\theta_{ii}=0$ on the diagonal.
The collection $\{\theta_{ij}\}_{ij\in E(G)}$ define a inner product on
$T^*_p\mathbb{R}^n$ as 
$$\langle x,y\rangle =\sum_{ij\in E(G)}\theta_{ij}(x_i-x_j)(y_i-y_j),$$
for any $x=(x_1,\ldots, x_n)$ and $y=(y_1,\ldots, y_n)$.
This inner produce induces an inner product $g$ on
the cotangent space $T_p^*M$. Let $g^{ij}_p=g_p(e^*_i,e^*_j)$.
Then we have the following simple expression:
$$g^{ij}_p= \theta_{ij}-\theta_{(i+1)j}
- \theta_{i(j+1)}+ \theta_{(i+1)(j+1)}.$$

This metric $g$ turns $M$ into a Riemannian manifold. In this paper, we are not interested in the geometry associated to the Levi-Civita connection induced by this Riemannian metric $g$; rather than a non-standard connection $\nabla$.
\begin{definition}
Consider the geodesic equation as 
\begin{equation}\label{eq:geodesic}
\left\{
\begin{aligned}
  \frac{d}{dt}{p}_i &= \sum_{j=1}^n (f_i-f_j) \theta_{ij},\\
  \frac{d}{dt}{f}_i &= -\frac{1}{2} \sum_{j=1}^n (f_i-f_j)^2 \frac{\partial \theta_{ij}}{\partial p_i} + h_i.
\end{aligned}\right.
\end{equation}
\end{definition}
Sometimes, we also use the dot notation to represent the derivative respect to time $t$. 
Let $\gamma\colon [0,1]\to M$ given by 
$\gamma(t)=(p_1(t),\ldots, p_n(t))$
be a curve. The tangent vector 
is given by $\dot \gamma=\sum_{i=1}^n \dot p_i\frac{\partial}{\partial p_i}.$ The vector $f=(f_1,\ldots, f_n)$ lives in $T_p^*\mathbb{R}^n$ so that
$i^*(f)$ is the lift of $\dot \gamma$ in $T_p^*M$ under the Riemannian metric $M$. Here $i^*$ is the pullback map of the standard inclusion map
$i\colon M\to \mathbb{R}^n$. The connection $\nabla$ (depending of the choice of $\{h_i\}$) is chosen so that the geodesic equation
$\nabla_{\dot \gamma(t)}\dot \gamma(t)=0$ is simplified to equation \eqref{eq:geodesic}.
\begin{definition}
Denote 
\begin{equation*}
    \Gamma_1(p,f,f)=\sum_{ij=1}^n \theta_{ij}(f_i-f_j)^2.
\end{equation*}
For simplicity of notation, we denote $\Gamma_1(f,f)=\Gamma_1(p,f,f)$. We call geodesic curve $\gamma(t)$ is {\em constant-speed} if $\Gamma_1(f,f)$ is a constant.
\end{definition}
Then we have the following Lemma.

 \begin{lemma}\label{t1}
The geodesic curve $\gamma(t)$ is constant speed if and only if the vector
$h=(h_1,\ldots, h_n)$ is orthogonal to $f=(f_1,\ldots, f_n)$ point-wisely, i.e.,
$$\sum_{ij=1}^n \theta_{ij} (f_i-f_j)(h_i-h_j) =0.$$
\end{lemma}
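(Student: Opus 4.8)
The plan is to reduce the statement to a single differential identity along geodesics, namely
\[
\frac{d}{dt}\,\Gamma_1(f,f)\;=\;2\sum_{ij=1}^n \theta_{ij}\,(f_i-f_j)(h_i-h_j),
\]
valid for any solution $(p,f)$ of \eqref{eq:geodesic} with the prescribed vector field $h$. Granting this, the curve $\gamma(t)$ has constant speed if and only if the left-hand side vanishes identically in $t$, which is precisely the stated pointwise orthogonality of $h$ and $f$; so the whole lemma reduces to proving the displayed identity.

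To prove it, I would differentiate $\Gamma_1(f,f)=\sum_{ij}\theta_{ij}(f_i-f_j)^2$ by the product rule and split the result into the contribution $\sum_{ij}\dot\theta_{ij}(f_i-f_j)^2$, coming from $\theta_{ij}=\theta(p_i,p_j)$ moving with $p$, and the contribution $2\sum_{ij}\theta_{ij}(f_i-f_j)(\dot f_i-\dot f_j)$, coming from $f$ moving. In the second contribution, expanding the product and relabeling $i\leftrightarrow j$ while using the symmetry $\theta_{ij}=\theta_{ji}$ collapses it to $4\sum_{ij}\theta_{ij}(f_i-f_j)\dot f_i$; inserting the second equation of \eqref{eq:geodesic} then produces the boundary term $4\sum_{ij}\theta_{ij}(f_i-f_j)h_i$, which a further $i\leftrightarrow j$ symmetrization turns into $2\sum_{ij}\theta_{ij}(f_i-f_j)(h_i-h_j)$, together with a cubic term $-2\sum_{i,j,k}\theta_{ij}\,\frac{\partial\theta_{ik}}{\partial p_i}(f_i-f_j)(f_i-f_k)^2$. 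In the first contribution, writing $\dot\theta_{ij}=\frac{\partial\theta_{ij}}{\partial p_i}\dot p_i+\frac{\partial\theta_{ij}}{\partial p_j}\dot p_j$, merging the two pieces via $\theta_{ij}=\theta_{ji}$ (so that $\frac{\partial\theta_{ij}}{\partial p_i}=\frac{\partial\theta_{ji}}{\partial p_i}$), and substituting $\dot p_i=\sum_k(f_i-f_k)\theta_{ik}$ from the first equation of \eqref{eq:geodesic}, gives $2\sum_{i,j,k}\theta_{ik}\,\frac{\partial\theta_{ij}}{\partial p_i}(f_i-f_j)^2(f_i-f_k)$.

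The key step is then to observe that the two cubic triple sums cancel: relabeling $j\leftrightarrow k$ in the one coming from $\dot f$ turns it into the exact negative of the one coming from $\dot\theta$. What survives is precisely the asserted right-hand side, which proves the identity and hence the lemma. I expect the main obstacle to be purely organizational — threading the symmetry $\theta_{ij}=\theta_{ji}$ and the induced identities among the partials $\frac{\partial\theta_{ij}}{\partial p_k}$ correctly through all the index relabelings, and keeping track of the numerical factors $2$ and $4$; once the cubic cancellation is seen, nothing further is needed. (Here ``constant speed'' is read as $\Gamma_1(f,f)$ being constant along the whole curve, and the identity delivers the equivalence at every $t$.)
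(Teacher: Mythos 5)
Your proof is correct and follows essentially the same route as the paper's: differentiate $\Gamma_1(f,f)$, symmetrize via $i\leftrightarrow j$ relabelings using $\theta_{ij}=\theta_{ji}$, substitute both geodesic equations, and observe that the two cubic triple sums cancel under a $j\leftrightarrow k$ relabeling. (Incidentally, your constant $2$ in the identity $\frac{d}{dt}\Gamma_1(f,f)=2\sum_{ij}\theta_{ij}(f_i-f_j)(h_i-h_j)$ is the correct one; the paper's final displayed line carries a harmless factor-of-two slip that does not affect the equivalence.)
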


Before showing Lemma \ref{t1}, we shall prove two identities based on symmetry and anti-symmetry.
\begin{lemma} \label{l3}
Assume for all $ij$, $a_{ij}=a_{ji}$ and $b_{ij}=-b_{ji}$. We have
\begin{align}
\label{eq:symmetry}
    \sum_{i,j=1}^n a_{ij}x_i&=\frac{1}{2}\sum_{i,j=1}^n a_{ij}(x_i+x_j),\\
    \sum_{i,j=1}^n b_{ij}x_i&=\frac{1}{2}\sum_{i,j=1}^n a_{ij}(x_i-x_j).
    \label{eq:asymmetry}
\end{align}
\end{lemma}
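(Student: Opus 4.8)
The plan is to prove both displays by the elementary device of swapping the two summation indices and averaging the sum with its relabeled copy; no analytic input is needed since all sums are finite.

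First I would establish \eqref{eq:symmetry}. Starting from $S:=\sum_{i,j=1}^n a_{ij}x_i$, relabel $i\leftrightarrow j$ to get $S=\sum_{i,j=1}^n a_{ji}x_j$, and then invoke the symmetry hypothesis $a_{ji}=a_{ij}$ to rewrite this as $S=\sum_{i,j=1}^n a_{ij}x_j$. Adding the two expressions for $S$ and dividing by $2$ yields $S=\tfrac12\sum_{i,j=1}^n a_{ij}(x_i+x_j)$, which is exactly \eqref{eq:symmetry}.

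Next I would treat \eqref{eq:asymmetry} by the same relabeling, now using the antisymmetry $b_{ji}=-b_{ij}$. Writing $T:=\sum_{i,j=1}^n b_{ij}x_i$ and swapping $i\leftrightarrow j$ gives $T=\sum_{i,j=1}^n b_{ji}x_j=-\sum_{i,j=1}^n b_{ij}x_j$; subtracting this from the original and dividing by $2$ produces $T=\tfrac12\sum_{i,j=1}^n b_{ij}(x_i-x_j)$. I note that the right-hand side of \eqref{eq:asymmetry} as printed carries $a_{ij}$ where the argument naturally yields $b_{ij}$; I would read that occurrence as $b_{ij}$ (a typo), since with a genuine $a_{ij}$ the identity is false in general.

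There is essentially no obstacle here: the whole content is the symmetrization/antisymmetrization trick, and the only things to be careful about are keeping the sign straight in the antisymmetric case and confirming the index set $\{1,\dots,n\}^2$ is preserved under the swap. The lemma will then be used to convert the first-order terms appearing in the geodesic equation \eqref{eq:geodesic} into $(f_i-f_j)$-type differences, which is precisely the form needed for Lemma \ref{t1}.
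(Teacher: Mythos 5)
Your proof is correct and follows exactly the paper's own argument: swap the summation indices, use the (anti)symmetry hypothesis, and average. Your observation that the right-hand side of \eqref{eq:asymmetry} should read $b_{ij}$ rather than $a_{ij}$ is also correct --- that is a typo in the statement, and the identity as you prove it (with $b_{ij}$) is the one actually used later in the paper.
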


\begin{proof}
By switching index $i$ and $j$, we have
\begin{align*}
\sum_{i,j=1}^n a_{ij}x_i&= \sum_{i,j=1}^n a_{ij}x_j.\\
\sum_{i,j=1}^n b_{ij}x_i&= -\sum_{i,j=1}^n b_{ij}x_j.
\end{align*}
Then by taking average, we can derive
Equations \eqref{eq:symmetry} and \eqref{eq:asymmetry}.
\end{proof}

\begin{proof}[Proof of Lemma \ref{t1}]
\begin{align*}
       \frac{d}{dt}\Gamma_1(f,f)&=\frac{d}{dt}\sum_{i,j=1}^n \theta_{ij}(f_i-f_j)^2\\
       &=  \sum_{i,j=1}^n \dot{\theta}_{ij}(f_i-f_j)^2 +\sum_{i,j=1}^n  2\theta_{ij}(f_i-f_j)(\dot{f}_i-\dot{f}_j)\\
       &=   \sum_{i,j=1}^n  \left(\frac{\partial \theta_{ij}}{\partial p_i} \dot{p}_i +  \frac{\partial \theta_{ij}}{\partial p_j} \dot{p}_j\right)(f_i-f_j)^2 + \sum_{i,j=1}^n  2\theta_{ij}(f_i-f_j)(\dot{f}_i-\dot{f}_j)\\
        &= 2  \sum_{i,j=1}^n \frac{\partial \theta_{ij}}{\partial p_i}\dot{p}_i (f_i-f_j)^2 + 4 \sum_{i,j=1}^n\theta_{ij}(f_i-f_j)\dot{f}_i\\
        &= 2 \sum_{i,j=1}^n \frac{\partial \theta_{ij}}{\partial p_i}
        \sum_{k=1}^n (f_i-f_k) \theta_{ik}
        (f_i-f_j)^2 + 4 \sum_{i,j=1}^n\theta_{ij}(f_i-f_j)\dot{f}_i\\
         &= 2 \sum_{i,j,k=1}^n \frac{\partial \theta_{ik}}{\partial p_i}
        (f_i-f_j) \theta_{ij}
        (f_i-f_k)^2 + 4 \sum_{i,j=1}^n\theta_{ij}(f_i-f_j)\dot{f}_i\\
        &= 4\sum_{i,j=1}^n (f_i-f_j)\theta_{ij}\left[\dot f_i+ \frac{1}{2} \sum_{k=1}^n (f_i-f_k)^2\frac{\partial \theta_{ik}}{\partial p_i}\right]\\
        &=4\sum_{i,j=1}^n (f_i-f_j)\theta_{ij} h_i\\
        &=4\sum_{i,j=1}^n (f_i-f_j)\theta_{ij} (h_i-h_j).
         \end{align*}
Thus, $\Gamma_1(f,f)$ is a constant if and only if  $\sum_{i,j=1}^n (f_i-f_j)\theta_{ij} (h_i-h_j)=0$.    
\end{proof}
\subsection{Hessian operators of energies on graphs}
We now fix any energy function $E\colon M\to \mathbb{R}^n$ and define the Hessian operator of $E$ on $(M, g)$ by 
\begin{equation*}
 \mathrm{Hess}^*_gE(p)(f,f):=\Gamma_2(p,f,f):=\frac{d^2}{dt^2} E(p(t)),
\end{equation*}
where $p(t)$ satisfies the geodesic equation \eqref{eq:geodesic}. Sometimes, we also denote $\Gamma_2(f,f)=\Gamma_2(p,f,f)$. We have the following theorem.
\begin{theorem}[Mean-field information matrices on graphs]\label{t2}
For any energy function $E(p)$, let 
\begin{equation*}
\eta_{ij}=\theta_{ij}\left(\frac{\partial E}{\partial p_i}- \frac{\partial E}{\partial p_i}\right).    
\end{equation*}
Assume that $\nabla E$ is orthogonal to 
$h$, i.e., $\sum_{ij=1}^n \eta_{ij}(h_i-h_j)=0$.
Then we have
\begin{align}
    \Gamma_2(p,f,f)&= \frac{1}{2} \sum_{i,j,k=1}^n (f_i-f_j)^2 
\frac{\partial \theta_{ij}}{\partial p_i} \eta_{ki}
+  \sum_{i,j,k=1}^n (f_i-f_j)(f_i-f_k)
    \frac{\partial \eta_{ij}}{\partial p_i} \theta_{ki} 
    \label{eq:Gamma2_1}
    \\
&=
\sum_{i,j,k=1}^n (f_i-f_j)(f_i-f_k)\left(\frac{1}{2} 
    \frac{\partial \theta_{ij}}{\partial p_i} \eta_{ki} + \frac{1}{2} 
    \frac{\partial \theta_{ki}}{\partial p_k} \eta_{jk} +  
    \frac{\partial \eta_{ij}}{\partial p_i} \theta_{ki}\right) 
    \label{eq:Gamma2_2}
    \\
&=  \frac{1}{2} \sum_{i,j,k=1}^n (f_i-f_j)^2 \left(  
\frac{\partial \theta_{ij}}{\partial p_i} \eta_{ki}
+ \frac{\partial \eta_{ij}}{\partial p_i} \theta_{ki} 
+ \frac{\partial \eta_{jk}}{\partial p_j} \theta_{ij}
-\frac{\partial \eta_{ki}}{\partial p_k} \theta_{jk}
    \right). \label{eq:Gamma2_3}
\end{align}
\end{theorem}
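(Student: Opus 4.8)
The plan is to imitate, one order of differentiation higher, the computation that proved Lemma~\ref{t1}: differentiate $E(p(t))$ twice along a solution of the geodesic system \eqref{eq:geodesic}, and at each stage use the averaging identities of Lemma~\ref{l3} to fold double sums together, together with \eqref{eq:geodesic} itself to eliminate $\dot f$ and $\dot p$. First I compute $\frac{d}{dt}E(p(t))=\sum_i \frac{\partial E}{\partial p_i}\dot p_i=\sum_{i,j}\frac{\partial E}{\partial p_i}(f_i-f_j)\theta_{ij}$; since $(f_i-f_j)\theta_{ij}$ is antisymmetric in $i,j$, the antisymmetry identity \eqref{eq:asymmetry} of Lemma~\ref{l3} converts this into $\tfrac12\sum_{i,j}(f_i-f_j)\theta_{ij}\bigl(\frac{\partial E}{\partial p_i}-\frac{\partial E}{\partial p_j}\bigr)=\tfrac12\sum_{i,j}(f_i-f_j)\eta_{ij}$, with $\eta_{ij}$ as in the statement; note that $\eta_{ij}$ is itself antisymmetric. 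Differentiating once more and applying the product rule gives $\Gamma_2(f,f)=\tfrac12\sum_{i,j}\dot\eta_{ij}(f_i-f_j)+\tfrac12\sum_{i,j}\eta_{ij}(\dot f_i-\dot f_j)$, and I treat the two sums separately.

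For the second sum, antisymmetry of $\eta_{ij}$ gives $\sum_{i,j}\eta_{ij}(\dot f_i-\dot f_j)=2\sum_{i,j}\eta_{ij}\dot f_i$; substituting the equation for $\dot f_i$ from \eqref{eq:geodesic} produces the term $-\tfrac12\sum_{i,j,k}\eta_{ij}(f_i-f_k)^2\frac{\partial\theta_{ik}}{\partial p_i}$ together with $\sum_{i,j}\eta_{ij}h_i$, and the latter equals $\tfrac12\sum_{i,j}\eta_{ij}(h_i-h_j)$ by Lemma~\ref{l3}, hence vanishes by the standing hypothesis $\sum_{i,j}\eta_{ij}(h_i-h_j)=0$. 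Relabeling $j\leftrightarrow k$ and using $\eta_{ik}=-\eta_{ki}$ then identifies the surviving part with the first term of \eqref{eq:Gamma2_1}. For the first sum, I expand $\dot\eta_{ij}=\frac{\partial\eta_{ij}}{\partial p_i}\dot p_i+\frac{\partial\eta_{ij}}{\partial p_j}\dot p_j$ (valid because $\theta_{ij}$ and the relevant part of $\nabla E$ depend only on $p_i$ and $p_j$); as in the proof of Lemma~\ref{t1}, the two resulting pieces coincide after the swap $i\leftrightarrow j$ ($(f_i-f_j)$ is odd, $\frac{\partial\eta_{ij}}{\partial p_i}$ changes sign, $\theta$ is symmetric), so $\tfrac12\sum_{i,j}\dot\eta_{ij}(f_i-f_j)=\sum_{i,j}\frac{\partial\eta_{ij}}{\partial p_i}\dot p_i(f_i-f_j)$, and inserting $\dot p_i=\sum_k(f_i-f_k)\theta_{ki}$ yields precisely the second term of \eqref{eq:Gamma2_1}. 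This proves the first identity.

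The remaining two displays follow from \eqref{eq:Gamma2_1} by purely algebraic rewriting. For \eqref{eq:Gamma2_2} I split $(f_i-f_j)^2=(f_i-f_j)(f_i-f_k)+(f_i-f_j)(f_k-f_j)$ inside the first term of \eqref{eq:Gamma2_1}: the first summand already has the required shape, while relabeling the dummy indices of the second (using $\theta_{ki}=\theta_{ik}$ and $\eta_{kj}=-\eta_{jk}$) turns it into $\tfrac12\sum_{i,j,k}(f_i-f_j)(f_i-f_k)\frac{\partial\theta_{ki}}{\partial p_k}\eta_{jk}$, which is the extra piece appearing in \eqref{eq:Gamma2_2}. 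For \eqref{eq:Gamma2_3} I apply the polarization identity $2(f_i-f_j)(f_i-f_k)=(f_i-f_j)^2+(f_i-f_k)^2-(f_j-f_k)^2$ to \eqref{eq:Gamma2_2}, relabel each of the three squared differences so that it reads $(f_i-f_j)^2$ (a $j\leftrightarrow k$ swap for the second, a cyclic permutation of $i,j,k$ for the third), collect the coefficient, and symmetrize it in $i\leftrightarrow j$ using that $(f_i-f_j)^2$ is even; one pair of terms cancels and the four-term bracket of \eqref{eq:Gamma2_3} is left.

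The argument is essentially all bookkeeping, and the step I expect to be most error-prone is sign tracking through these repeated relabelings: $\eta_{ij}$ is odd whereas $\theta_{ij}$ and $(f_i-f_j)^2$ are even, so every permutation of the three summation indices has to be followed with care, and the passage from \eqref{eq:Gamma2_1} to \eqref{eq:Gamma2_2} to \eqref{eq:Gamma2_3} chains several of them together. A secondary point worth stating explicitly alongside \eqref{eq:Gamma2_1} is which extension of $E$ off the simplex is meant by the symbols $\frac{\partial\eta_{ij}}{\partial p_i}$; the two-term chain rule for $\dot\eta_{ij}$ used above is exactly the convention under which the displayed identities hold as written.
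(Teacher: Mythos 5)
Your proposal is correct and follows essentially the same route as the paper: differentiate $E(p(t))$ twice along the geodesic, use the symmetry/antisymmetry averaging identities of Lemma~\ref{l3} and the orthogonality hypothesis to kill the $h$-term, and obtain \eqref{eq:Gamma2_2}--\eqref{eq:Gamma2_3} by reindexing; the only cosmetic difference is that you carry out the polarization and relabeling by hand where the paper packages them into its $3$-tensor identities \eqref{eq:ijk2ij}--\eqref{eq:ij2ijk}. Your closing remark about the two-term chain rule for $\dot\eta_{ij}$ is apt, since for energies whose $\partial E/\partial p_i$ depends on all coordinates this is a genuine convention the paper also adopts silently.
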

The following lemma is of independent interest. It serves a bridge between spectral graph theory and geometric calculations in probability simplex. 
\begin{lemma}
For any 3-tensor $\{a_{ijk}\}$, $\{b_{ijk}\}$, and any vector
$x=(x_1,\ldots, x_n)$,
we have
\begin{align}
\label{eq:ijk2ij}
\sum_{i,j,k=1}^n a_{ijk}(x_i-x_j)(x_i-x_k)&=\frac{1}{2}\sum_{i,j,k=1}^n(a_{ijk}+a_{jki}-a_{kij})(x_i-x_j)^2.\\
\sum_{i,j,k=1}^n b_{ijk}(x_i-x_j)^2 &= \sum_{i,j,k=1}^n (b_{ijk}+b_{kij})(x_i-x_j)(x_i-x_k). 
\label{eq:ij2ijk}
\end{align}
\end{lemma}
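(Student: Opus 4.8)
The plan is to prove both identities by the same elementary device: the polarization identity for squared differences, followed by relabeling of the three summation indices and use of the symmetry $(x_i-x_j)^2=(x_j-x_i)^2$ to symmetrize coefficients. No high-order calculus or properties of $\theta$ are needed; this is pure bookkeeping in a finite sum.

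For \eqref{eq:ijk2ij}, I would start from the pointwise identity
\[
(x_i-x_j)(x_i-x_k)=\tfrac12\bigl[(x_i-x_j)^2+(x_i-x_k)^2-(x_j-x_k)^2\bigr],
\]
which holds because $(x_i-x_k)-(x_i-x_j)=x_j-x_k$. Substituting into the left-hand side splits it as $\tfrac12(S_1+S_2-S_3)$, where $S_1,S_2,S_3$ weight $a_{ijk}$ against $(x_i-x_j)^2$, $(x_i-x_k)^2$, $(x_j-x_k)^2$ respectively. In $S_2$ I relabel the dummy indices $j\leftrightarrow k$ to get $\sum a_{ikj}(x_i-x_j)^2$, then use $(x_i-x_j)^2=(x_j-x_i)^2$ together with the further relabel $i\leftrightarrow j$ to rewrite it as $\sum a_{jki}(x_i-x_j)^2$. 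In $S_3$ I apply the cyclic relabel $(i,j,k)\mapsto(j,k,i)$ to get $\sum a_{kij}(x_i-x_j)^2$. Collecting the three sums yields exactly the coefficient tensor $\tfrac12(a_{ijk}+a_{jki}-a_{kij})$ on the right.

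For \eqref{eq:ij2ijk}, I would instead work from the right-hand side. Splitting it into its $b_{ijk}$-part and $b_{kij}$-part, I relabel the latter by the cyclic substitution $(i,j,k)=(j',k',i')$, which turns $\sum b_{kij}(x_i-x_j)(x_i-x_k)$ into $\sum b_{ijk}(x_j-x_k)(x_j-x_i)$. Hence the right-hand side equals $\sum_{ijk}b_{ijk}\bigl[(x_i-x_j)(x_i-x_k)+(x_j-x_k)(x_j-x_i)\bigr]$, and the bracket factors as $(x_i-x_j)\bigl[(x_i-x_k)-(x_j-x_k)\bigr]=(x_i-x_j)^2$, producing the left-hand side. (Alternatively, \eqref{eq:ij2ijk} can be read off \eqref{eq:ijk2ij} via the natural pairing of $3$-tensors against quadratic forms, but the direct relabeling is shorter.)

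The only point requiring care—hence the ``main obstacle,'' such as it is—is choosing the correct permutation of dummy indices in each of the three sums and remembering that, since $(x_i-x_j)^2$ is symmetric in $i,j$, one is free to replace the coefficient tensor by its average under $i\leftrightarrow j$; this is precisely what reconciles the ``$a_{ikj}$'' one first obtains with the ``$a_{jki}$'' appearing in the statement. I would sanity-check the final coefficient tensors by evaluating both sides on a rank-one tensor $a_{ijk}=u_iv_jw_k$ (resp.\ $b_{ijk}$) or on small $n$ before presenting the computation.
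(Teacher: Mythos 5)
Your proof is correct, and all of your index relabelings check out (in particular $S_2=\sum a_{jki}(x_i-x_j)^2$ and $S_3=\sum a_{kij}(x_i-x_j)^2$ are right). The route differs from the paper's in its logical structure. The paper proves \eqref{eq:ij2ijk} first, by telescoping $(x_i-x_k)=(x_i-x_j)+(x_j-x_k)$ inside the product $(x_i-x_j)(x_i-x_k)$, relabeling the resulting cross term cyclically to $-\sum b_{kij}(x_i-x_j)(x_i-x_k)$, and moving it to the other side; it then deduces \eqref{eq:ijk2ij} as a purely formal corollary by substituting $b_{ijk}=\tfrac12(a_{ijk}+a_{jki}-a_{kij})$ and checking $b_{ijk}+b_{kij}=a_{ijk}$. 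You instead prove the two identities independently: for \eqref{eq:ijk2ij} you use the polarization identity $2uv=u^2+v^2-(u-v)^2$ followed by three relabelings, and for \eqref{eq:ij2ijk} you collapse the right-hand side directly — which is essentially the paper's computation read in the opposite direction. The paper's deduction of \eqref{eq:ijk2ij} from \eqref{eq:ij2ijk} is slicker but requires guessing the right substitution; your polarization argument is more self-contained and makes it transparent where each of the three terms $a_{ijk}$, $a_{jki}$, $-a_{kij}$ comes from. Both are valid elementary bookkeeping proofs of the same strength.
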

\begin{proof}
Let us prove equation \eqref{eq:ij2ijk} first.
\begin{align*}
    \sum_{i,j,k=1}^n b_{ijk}(x_i-x_j)(x_j-x_k) &= \sum_{i,j,k=1}^n b_{ijk}(x_i-x_j)(x_i-x_j+x_j-x_k)\\
    &= \sum_{i,j,k=1}^n b_{ijk}(x_i-x_j)^2 - \sum_{i,j,k=1}^n b_{ijk}(x_j-x_i)(x_j-x_k)\\
    &=\sum_{i,j,k=1}^n b_{ijk}(x_i-x_j)^2 - \sum_{i,j,k=1}^n b_{kij}(x_i-x_j)(x_i-x_k).
\end{align*}
We now derive equation \eqref{eq:ijk2ij} from equation \eqref{eq:ij2ijk} by setting $b_{ijk}= \frac{1}{2}(a_{ijk}+a_{jki}-a_{kij})$. Observe that
$$b_{ijk}+b_{kij}= \frac{1}{2}(a_{ijk}+a_{jki}-a_{kij}) + \frac{1}{2}(a_{kij}+a_{ijk}-a_{jki})
=a_{ijk}.$$
\end{proof}

\begin{proof}[Proof of Theorem \ref{t2}]
Let $\partial_i E=\frac{\partial E}{\partial p_i}$. Then
we have
\begin{align*}
    \frac{d}{dt} E(p)  
    &= \sum_{i=1}^n \partial_i E \dot p_i\\
    &= \sum_{i=1}^n \sum_{j=1}^n (f_i-f_j) \theta_{ij} \partial_i E\\
    &= \frac{1}{2}\sum_{i,j=1}^n (f_i-f_j) \theta_{ij}
    (\partial_i E-\partial_j E)
    \hspace*{4mm} &\mbox{by \eqref{eq:asymmetry}}\\
    &= \frac{1}{2} \sum_{i,j=1}^n (f_i-f_j) \eta_{ij} \\
    &= \sum_{i,j=1}^n f_i \eta_{ij}. \hspace*{4mm} &\mbox{by \eqref{eq:asymmetry}}
\end{align*}
Thus, we have
 $$ \frac{d^2}{dt^2} E(p) =  \sum_{i,j=1}^n \dot f_i \eta_{ij} + \sum_{i,j=1}^n f_i \dot \eta_{ij}.$$
By plugging in the formula for $\dot p_i$, the first item is
\begin{align*}
    \sum_{i,j,k=1}^n \dot f_i \eta_{ij}
    &= \frac{1}{2}\sum_{i,j,k=1}^n (f_i-f_j)^2 \frac{\partial \theta_{ij}}{\partial p_i} \eta_{ki}
    + \sum_{i,j=1}^n \eta_{ij}h_i
    \\
     &= \frac{1}{2}\sum_{i,j,k=1}^n (f_i-f_j)^2 \frac{\partial \theta_{ij}}{\partial p_i} \eta_{ki}
    + \frac{1}{2}\sum_{i,j=1}^n \eta_{ij}(h_i-h_j)
    \\
    &=\frac{1}{2}\sum_{i,j,k=1}^n (f_i-f_j)^2 \frac{\partial \theta_{ij}}{\partial p_i} \eta_{ki}\\
    &= \frac{1}{2}\sum_{i,j,k=1}^n (f_i-f_j)(f_i-f_k)\left(
    \frac{\partial \theta_{ij}}{\partial p_i} \eta_{ki} + 
    \frac{\partial \theta_{ki}}{\partial p_k} \eta_{jk}\right). \hspace*{4mm}\mbox{by \eqref{eq:ij2ijk}}
\end{align*}
 Here we use the assumption $\sum_{i,j=1}^n \eta_{ij}(h_i-h_j)=0$. We now compute the second item.
 \begin{align*}
     \sum_{i,j=1}^n  f_i \dot \eta_{ij}
     &=  \frac{1}{2} \sum_{i,j=1}^n (f_i-f_j) \dot \eta_{ij}\\
&= \frac{1}{2} \sum_{i,j=1}^n (f_i-f_j) \left(\frac{\partial \eta_{ij}}{\partial p_i} \dot p_i+ \frac{\partial \eta_{ij}}{\partial p_j} \dot p_j \right)\\
       &=  \sum_{i,j=1}^n (f_i-f_j) \frac{\partial \eta_{ij}}{\partial p_i} \dot p_i \hspace*{4mm}&\mbox{by \eqref{eq:symmetry}}\\
       &= \sum_{i,j=1}^n (f_i-f_j) \frac{\partial \eta_{ij}}{\partial p_i} \sum_{k=1}^n (f_i-f_k)\theta_{ik}\\
  &=\sum_{i,j,k=1}^n (f_i-f_j) (f_i-f_k)\frac{\partial \eta_{ij}}{\partial p_i}\theta_{ik}\\
  &=\frac{1}{2} \sum_{i,j,k=1}^n (f_i-f_j)^2 
  \left( \frac{\partial \eta_{ij}}{\partial p_i} \theta_{ki} 
+ \frac{\partial \eta_{jk}}{\partial p_j} \theta_{ij}
-\frac{\partial \eta_{ki}}{\partial p_k} \theta_{jk}
    \right).
  \hspace*{4mm} &\mbox{by \eqref{eq:ijk2ij}}
\end{align*} 
 Combining two items together, we have
\begin{align*}
\Gamma_2(p,f,f)&=
\frac{1}{2} \sum_{i,j,k=1}^n (f_i-f_j)^2 
\frac{\partial \theta_{ij}}{\partial p_i} \eta_{ki}
+  \sum_{i,j,k=1}^n (f_i-f_j)(f_i-f_k)
    \frac{\partial \eta_{ij}}{\partial p_i} \theta_{ki} \\
&=\sum_{i,j,k=1}^n (f_i-f_j)(f_i-f_k)\left(\frac{1}{2} 
    \frac{\partial \theta_{ij}}{\partial p_i} \eta_{ki} + \frac{1}{2} 
    \frac{\partial \theta_{jk}}{\partial p_j} \eta_{ij} +  
    \frac{\partial \eta_{ij}}{\partial p_i} \theta_{ki}\right) \\
&=  \frac{1}{2} \sum_{i,j,k=1}^n (f_i-f_j)^2 \left(  
\frac{\partial \theta_{ij}}{\partial p_i} \eta_{ki}
+ \frac{\partial \eta_{ij}}{\partial p_i} \theta_{ki} 
+ \frac{\partial \eta_{jk}}{\partial p_j} \theta_{ij}
-\frac{\partial \eta_{ki}}{\partial p_k} \theta_{jk}
    \right).
 \end{align*}
 \end{proof}
 From now on, we always assume that the vector $h$ is orthogonal to both $f$ and $\nabla E$. Theorem \ref{t2} implies that
$\Gamma_2(p,f,f)$ can be written as a quadratic form $\sum_{ij=1}^n a_{ij}(f_i-f_j)^2$, where 
\begin{equation*}
\begin{split}
a_{ij}=&\frac{1}{2}\sum_{k=1}^n \Big(  
\frac{\partial \theta_{ij}}{\partial p_i} \eta_{ki}
+ \frac{\partial \eta_{ij}}{\partial p_i} \theta_{ki} 
+ \frac{\partial \eta_{jk}}{\partial p_j} \theta_{ij}
-\frac{\partial \eta_{ki}}{\partial p_k} \theta_{jk}\\
&\qquad-\frac{\partial \theta_{ij}}{\partial p_j} \eta_{jk}
- \frac{\partial \eta_{ij}}{\partial p_j} \theta_{jk} 
-\frac{\partial \eta_{ki}}{\partial p_i} \theta_{ij}
+\frac{\partial \eta_{jk}}{\partial p_k} \theta_{ki}\Big),  
\end{split}
\end{equation*}
is independent of the choice of $\{h_i\}$.  Without loss  of generality, we can set $h_i=0$.

\begin{definition}
   Given a weighted metric function $\{\theta_{ij}\}$ over a graph $G$ and an energy function $E$, the local Ricci curvature bound $\kappa^G(p)$ on graph at a point $p$ is the largest number satisfying
   \begin{equation*}
       \Gamma_2(p,f,f)\geq \kappa^G(p) \Gamma_1(p,f,f),
   \end{equation*}
   for any constant speed geodesics passing through $p$.
\end{definition}
   
 \begin{definition}
   Given a weighted metric $\{\theta_{ij}\}$ over a graph $G$ and an energy function $E$, the global Ricci curvature bound $\kappa^G_0$ on graph is the largest number satisfying 
   \begin{equation*}
       \Gamma_2(p,f,f)\geq \kappa^G_0 \Gamma_1(p,f,f),
   \end{equation*}
   for any constant speed geodesics and any point $p$.
\end{definition}  
\begin{remark}
We remark that the ``Ricci curvature on graph'', by a triplet $(G, \{\theta_{ij}\}, E)$, refers to the smallest eigenvalue of the Hessian matrix of energy function $E$ in $(M, g)$. It is not the Ricci curvature tensor in $(M, g)$. 
\end{remark}

By definition, we have $\kappa^G_0=\min_p\{\kappa^G(p)\}$. When the graph $G$ is clear under context, we will omit $G$ and write $\kappa(p)$ and $\kappa_0$, respectively. From the view of spectral graph theory, we define the Laplacian matrix $L(A)=D-A$, where $D$ is a diagonal matrix of row sum and $A$ is the adjacency matrix of the graph. Let $\Theta$ denote the matrix $(\theta_{ij})$ and $L(\Theta)$ be the Laplacian of $\Theta$. 

\begin{definition}
A pair $(\kappa, \alpha)$ is called an eigenvalue-eigenvector pair of $L(A)$ relative to $L(\Theta)$ if
$$L(A)\alpha=\kappa L(\Theta) \alpha.$$
\end{definition}

Note that $(0, {\bf 1})$ is the trivial eigenvalue-eigenvector pair.
Let $(\kappa_i, \alpha_i)$ (for $1\leq i \leq n-1)$ are all eigenvalue--eigenvector pair sorted in the increasing order of $\kappa_i$.
Each $\kappa_i$ is a function on $M$ while each $\alpha_i$ is a section of $T^*M$. Then we have the following property.
\begin{property}
We have 
\begin{equation*}
   \kappa=\kappa_1. 
\end{equation*}
\end{property}
 \begin{definition}[Constant Hessian operators]
 A triple $(G, \{\theta_{ij}\}, E)$ has a constant Ricci curvature if there is a constant $C$ such that $\Gamma_2(f,f)=C\Gamma_1(f,f)$ for any constant-speed geodesics. 
 \end{definition}
Whenever $(G, \{\theta_{ij}\}, E)$ has a constant Ricci curvature, the Wasserstein distance on graph has a very simple formula. In the next section,
we prove that such such triple exists for $G=K_2$ with any given energy function $E$.

In literature \cite{vil2008}, a known fact is that the Hessian matrix of negative Boltzman-Shannon entropy in Wasserstein-2 metric is the expectation of Gamma two operators; see details in appendix. In this paper, we extend this relation to discrete states for both ``information'' type energies and ``mean-field'' type Wasserstein metrics. For this reason, we name Hessian operators $\Gamma_2(p,f,f)$ {\em mean-field-information Gamma calculus}.  

\subsection{Examples}
We last present several examples of mean-field information Gamma calculus for several well known energy functions.
\begin{corollary}\label{Col1}
The following equalities hold. 
\begin{itemize}
\item[(i)]  Consider a linear energy function:
 \begin{equation*}
     E(p)=\sum_{i=1}^nV_ip_i,
 \end{equation*}
 where $V_i\in\mathbb{R}$, $i=1,\cdots, n$ are given constants. Hence 
\begin{equation*}
\begin{split}
&\Gamma_2(p,f,f)\\
=&\frac{1}{2}\sum_{i,j,k=1}^n(f_i-f_j)^2\Big([\frac{\partial \theta_{ij}}{\partial p_i} \theta_{ki}-\frac{\partial\theta_{jk}}{\partial p_j}\theta_{ij}](V_k-V_j)-\frac{\partial\theta_{ki}}{\partial p_k}\theta_{jk}(V_k-V_i)\Big).
\end{split}
\end{equation*}
\item[(ii)] Consider an interaction energy function:
 \begin{equation*}
    E(p)=\frac{1}{2}\sum_{i=1}^nW_{ij}p_ip_j, 
 \end{equation*}
 where $W_{ij}=W_{ji}\in\mathbb{R}$, $i, j\in\{1,\cdots, n\}$, are given symmetric matrix elements. Hence
\begin{equation*}
\begin{split}
&\Gamma_2(p,f,f)\\
=&\frac{1}{2}\sum_{i,j,k=1}^n(f_i-f_j)^2\Big([\frac{\partial \theta_{ij}}{\partial p_i}\theta_{ki}-\frac{\partial\theta_{jk}}{\partial p_j}\theta_{ij}][(Wp)_k-(Wp)_j]-\frac{\partial\theta_{ki}}{\partial p_k}\theta_{jk}[(Wp)_k-(Wp)_i]\\
&\hspace{3cm}+(W_{ii}-W_{ij})\theta_{ij}\theta_{ki}+(W_{jj}-W_{jk})\theta_{jk}\theta_{ij}-(W_{kk}-W_{ki})\theta_{ki}\theta_{jk}\Big).
\end{split}
\end{equation*}
\item[(iii)]  Consider an entropy function:
 \begin{equation*}
    E(p)=\sum_{i=1}^n U(p_i), 
 \end{equation*}
 where $U\colon \mathbb{R}\rightarrow\mathbb{R}$ is a convex function. Hence
\begin{equation*}
\begin{split}
&\Gamma_2(p,f,f)\\
=&\frac{1}{2}\sum_{i,j,k=1}^n(f_i-f_j)^2\Big([\frac{\partial \theta_{ij}}{\partial p_i}\theta_{ki}-\frac{\partial\theta_{jk}}{\partial p_j}\theta_{ij}][U'(p_k)-U'(p_j)]-\frac{\partial\theta_{ki}}{\partial p_k}\theta_{jk}[U'(p_k)-U'(p_i)]\\
&\hspace{3cm}+U''(p_i)\theta_{ij}\theta_{ki}+U''(p_j)\theta_{jk}\theta_{ij}-U''(p_k)\theta_{ki}\theta_{jk}\Big).
\end{split}
\end{equation*}
\end{itemize}
\end{corollary}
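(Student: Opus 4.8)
The plan is to specialize the general formula \eqref{eq:Gamma2_3} from Theorem \ref{t2} to each of the three energy functions, computing $\eta_{ij} = \theta_{ij}(\partial_i E - \partial_j E)$ and its partial derivatives in each case. The key observation that organizes the computation is that $\eta_{ij}$ always factors as $\theta_{ij}$ times a ``potential difference'' $V_i - V_j$, where $V_i = \partial_i E$ depends only on the energy; so by the product rule $\frac{\partial \eta_{ij}}{\partial p_i} = \frac{\partial \theta_{ij}}{\partial p_i}(V_i - V_j) + \theta_{ij}\frac{\partial V_i}{\partial p_i}$, and only the second term is genuinely new compared to the linear case.

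First I would treat case (i). Here $\partial_i E = V_i$ is constant, so $\eta_{ij} = \theta_{ij}(V_i - V_j)$ and $\frac{\partial \eta_{ij}}{\partial p_i} = \frac{\partial \theta_{ij}}{\partial p_i}(V_i - V_j)$. Substituting these four pieces into the bracket of \eqref{eq:Gamma2_3} — namely $\frac{\partial \theta_{ij}}{\partial p_i}\eta_{ki} + \frac{\partial \eta_{ij}}{\partial p_i}\theta_{ki} + \frac{\partial \eta_{jk}}{\partial p_j}\theta_{ij} - \frac{\partial \eta_{ki}}{\partial p_k}\theta_{jk}$ — and collecting terms by the potential differences $V_k - V_i$, $V_j - V_k$, $V_i - V_j$ gives, after a short rearrangement using $\theta$ symmetry, the stated expression with the two bracketed coefficients $[\frac{\partial \theta_{ij}}{\partial p_i}\theta_{ki} - \frac{\partial\theta_{jk}}{\partial p_j}\theta_{ij}]$ and $-\frac{\partial\theta_{ki}}{\partial p_k}\theta_{jk}$. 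Cases (ii) and (iii) then follow by additivity: for the interaction energy $\partial_i E = (Wp)_i$ and $\frac{\partial (Wp)_i}{\partial p_i} = W_{ii}$, while for the separable entropy $\partial_i E = U'(p_i)$ and $\frac{\partial U'(p_i)}{\partial p_i} = U''(p_i)$. In both cases the ``linear part'' of $\eta$ reproduces exactly the case-(i) formula with $V_i$ replaced by $(Wp)_i$ resp. $U'(p_i)$, and the extra term $\theta_{ij}\frac{\partial V_i}{\partial p_i}$ contributes the additional diagonal-second-derivative terms $(W_{ii}-W_{ij})\theta_{ij}\theta_{ki} + (W_{jj}-W_{jk})\theta_{jk}\theta_{ij} - (W_{kk}-W_{ki})\theta_{ki}\theta_{jk}$ resp. the analogous expression with $U''$.

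The main obstacle is purely bookkeeping: in \eqref{eq:Gamma2_3} the four summand types carry different index patterns ($\frac{\partial \theta_{ij}}{\partial p_i}\eta_{ki}$ versus $\frac{\partial \eta_{jk}}{\partial p_j}\theta_{ij}$ versus $\frac{\partial \eta_{ki}}{\partial p_k}\theta_{jk}$), so after the product-rule expansion one obtains eight terms that must be regrouped so that the potential differences appearing are consistently written as $(V_k - V_j)$, $(V_k - V_i)$, matching the statement; this requires carefully tracking which of $i,j,k$ plays the role of the ``base'' index in each $\eta$-factor and exploiting $\theta_{ij} = \theta_{ji}$ and the summation symmetry in $(i,j,k)$. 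I would carry out the expansion once for case (i), verify the regrouping, and then invoke linearity of $E \mapsto \partial_i E$ and of the Hessian to deduce (ii) and (iii) without redoing the full calculation.
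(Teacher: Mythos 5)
Your proposal is correct and follows essentially the same route as the paper: compute $\eta_{ij}$ and $\frac{\partial\eta_{ij}}{\partial p_i}$ for each energy, substitute into \eqref{eq:Gamma2_3}, and regroup the potential differences $(V_k-V_j)$ and $(V_k-V_i)$ exactly as you describe. One small caution: in case (ii) the product rule must read $\frac{\partial\eta_{ij}}{\partial p_i}=\frac{\partial\theta_{ij}}{\partial p_i}\left[(Wp)_i-(Wp)_j\right]+\theta_{ij}(W_{ii}-W_{ij})$, since $(Wp)_j$ also depends on $p_i$; your displayed coefficients $(W_{ii}-W_{ij})$, etc., are correct, but the generic ``new term'' $\theta_{ij}\,\partial V_i/\partial p_i$ stated in your first paragraph omits the $-\partial V_j/\partial p_i=-W_{ij}$ contribution, which vanishes only in cases (i) and (iii).
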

\begin{proof}
The proof follows from Theorem \ref{t2}. 

\noindent(i) Consider $E(p)=\sum_{i=1}^nV_ip_i$. In this case, 
 \begin{equation*}
      \eta_{ij}=\theta_{ij}(V_i-V_j),\qquad \frac{\partial\eta_{ij}}{\partial p_i}=\frac{\partial\theta_{ij}}{\partial p_i}(V_i-V_j).  
 \end{equation*}
Hence
\begin{equation*}
\begin{split}
&\Gamma_2(p,f,f)\\
=&\frac{1}{2}\sum_{i,j,k=1}^n(f_i-f_j)^2\left(  
\frac{\partial \theta_{ij}}{\partial p_i} \eta_{ki}
+ \frac{\partial \eta_{ij}}{\partial p_i} \theta_{ki} 
+ \frac{\partial \eta_{jk}}{\partial p_j} \theta_{ij}
-\frac{\partial \eta_{ki}}{\partial p_k} \theta_{jk}
    \right)\\
=&\frac{1}{2}\sum_{i,j,k=1}^n(f_i-f_j)^2\Big(\frac{\partial \theta_{ij}}{\partial p_i} \theta_{ki}(V_k-V_i)+\frac{\partial\theta_{ij}}{\partial p_i}(V_i-V_j)\theta_{ki}+\frac{\partial\theta_{jk}}{\partial p_j}(V_j-V_k)\theta_{ij}-\frac{\partial\theta_{ki}}{\partial p_k}(V_k-V_i)\theta_{jk}\Big)\\
=&\frac{1}{2}\sum_{i,j,k=1}^n(f_i-f_j)^2\Big([\frac{\partial \theta_{ij}}{\partial p_i} \theta_{ki}-\frac{\partial\theta_{jk}}{\partial p_j}\theta_{ij}](V_k-V_j)-\frac{\partial\theta_{ki}}{\partial p_k}\theta_{jk}(V_k-V_i)\Big).
\end{split}
\end{equation*}

\noindent(ii) Consider $E(p)=\frac{1}{2}\sum_{i=1}^nW_{ij}p_ip_j$. In this case, denote $(Wp)_i=\sum_{j=1}^nW_{ij}p_j$, then
 \begin{equation*}
      \eta_{ij}=\theta_{ij}[(Wp)_i-(Wp)_j],  
 \end{equation*}
 and 
  \begin{equation*}
      \frac{\partial\eta_{ij}}{\partial p_i}=\frac{\partial\theta_{ij}}{\partial p_i}[(Wp)_i-(Wp)_j]+\theta_{ij}(W_{ii}-W_{ij}).
 \end{equation*}
Hence
\begin{equation*}
\begin{split}
&\Gamma_2(p,f,f)\\
=&\frac{1}{2}\sum_{i,j,k=1}^n(f_i-f_j)^2\left(  
\frac{\partial \theta_{ij}}{\partial p_i} \eta_{ki}
+ \frac{\partial \eta_{ij}}{\partial p_i} \theta_{ki} 
+ \frac{\partial \eta_{jk}}{\partial p_j} \theta_{ij}
-\frac{\partial \eta_{ki}}{\partial p_k} \theta_{jk}
    \right)\\
=&\frac{1}{2}\sum_{i,j,k=1}^n(f_i-f_j)^2\Big(\frac{\partial \theta_{ij}}{\partial p_i}\theta_{ki}[(Wp)_k-(Wp)_i]+[\frac{\partial\theta_{ij}}{\partial p_i}[(Wp)_i-(Wp)_j]+\theta_{ij}(W_{ii}-W_{ij})]\theta_{ki}\\
&\hspace{3cm}+\frac{\partial\theta_{jk}}{\partial p_j}[(Wp)_j-(Wp)_k]\theta_{ij}+\theta_{jk}(W_{jj}-W_{jk})\theta_{ij}\\
&\hspace{3cm}-\frac{\partial\theta_{ki}}{\partial p_k}[(Wp)_k-(Wp)_i]\theta_{jk}-\theta_{ki}(W_{kk}-W_{ki})\theta_{jk}\Big)\\
=&\frac{1}{2}\sum_{i,j,k=1}^n(f_i-f_j)^2\Big([\frac{\partial \theta_{ij}}{\partial p_i}\theta_{ki}-\frac{\partial\theta_{jk}}{\partial p_j}\theta_{ij}][(Wp)_k-(Wp)_j]-\frac{\partial\theta_{ki}}{\partial p_k}\theta_{jk}[(Wp)_k-(Wp)_i]\\
&\hspace{3cm}+(W_{ii}-W_{ij})\theta_{ij}\theta_{ki}+(W_{jj}-W_{jk})\theta_{jk}\theta_{ij}-(W_{kk}-W_{ki})\theta_{ki}\theta_{jk}\Big).
\end{split}
\end{equation*}
(iii) Consider $E(p)=\sum_{i=1}^n U(p_i)$. In this case, 
 \begin{equation*}
      \eta_{ij}=\theta_{ij}[U'(p_i)-U'(p_j)],\qquad \frac{\partial\eta_{ij}}{\partial p_i}= \frac{\partial\theta_{ij}}{\partial p_i}[U'(p_i)-U'(p_j)]+\theta_{ij}U''(p_i).   
 \end{equation*}
Hence
\begin{equation*}
\begin{split}
&\Gamma_2(p,f,f)\\
=&\frac{1}{2}\sum_{i,j,k=1}^n(f_i-f_j)^2\left(  
\frac{\partial \theta_{ij}}{\partial p_i} \eta_{ki}
+ \frac{\partial \eta_{ij}}{\partial p_i} \theta_{ki} 
+ \frac{\partial \eta_{jk}}{\partial p_j} \theta_{ij}
-\frac{\partial \eta_{ki}}{\partial p_k} \theta_{jk}
    \right)\\
=&\frac{1}{2}\sum_{i,j,k=1}^n(f_i-f_j)^2\Big(\frac{\partial \theta_{ij}}{\partial p_i}\theta_{ki}[U'(p_k)-U'(p_i)]+[\frac{\partial\theta_{ij}}{\partial p_i}[U'(p_i)-U'(p_j)]+\theta_{ij}U''(p_i)]\theta_{ki}\\
&\hspace{3cm}+\frac{\partial\theta_{jk}}{\partial p_j}[U'(p_j)-U'(p_k)]\theta_{ij}+\theta_{jk}U''(p_j)\theta_{ij}\\
&\hspace{3cm}-\frac{\partial\theta_{ki}}{\partial p_k}[U'(p_k)-U'(p_i)]\theta_{jk}-\theta_{ki}U''(p_k)\theta_{jk}\Big)\\
=&\frac{1}{2}\sum_{i,j,k=1}^n(f_i-f_j)^2\Big([\frac{\partial \theta_{ij}}{\partial p_i}\theta_{ki}-\frac{\partial\theta_{jk}}{\partial p_j}\theta_{ij}][U'(p_k)-U'(p_j)]-\frac{\partial\theta_{ki}}{\partial p_k}\theta_{jk}[U'(p_k)-U'(p_i)]\\
&\hspace{3cm}+U''(p_i)\theta_{ij}\theta_{ki}+U''(p_j)\theta_{jk}\theta_{ij}-U''(p_k)\theta_{ki}\theta_{jk}\Big).
\end{split}
\end{equation*}
\end{proof}

We note that the mean-field information Gamma two operators, a.k.a. Hessian matrices in $(M,g)$, depend on the choices of triplet $(E, \theta, G)$. We next present several examples of $\theta$ and $E$, for which the Hessian matrices have simpler formulations. We remark that particular choices of $\theta$ and $E$ have been widely used in studying Markov processes on discrete states; see details in \cite{BT, Maas2012}.
\begin{example}
For linear energies, the Gamma two operator can be a homogeneous degree one function of $p$, when we select function $\theta$ as a homogeneous degree one function of $p$. E.g., consider $E(p)=\sum_{i=1}^n V_ip_i$ and $\theta_{ij}=\frac{p_i+p_j}{2}$. Then
\begin{equation*}
\begin{split}
\Gamma_2(p,f,f)
=&\frac{1}{8}\sum_{i,j,k=1}^n(f_i-f_j)^2\Big((p_k-p_j)(V_k-V_j)-(p_k+p_j)(V_k-V_i)\Big).
\end{split}
\end{equation*}
\end{example}

\begin{example}
For interaction energies, the Gamma two operator can be a linear function of $p$, when we select function $\theta$ as a constant function of $p$. E.g., consider $E(p)=\frac{1}{2}\sum_{i,j=1}^n W_{ij}p_ip_j$ and $\theta_{ij}=c_{ij}$, where $c_{ij}$ are given constants for any $i,j=1,\cdots, n$. Then 
\begin{equation*}
\begin{split}
&\Gamma_2(p,f,f)\\=&\frac{1}{2}\sum_{i,j,k=1}^n(f_i-f_j)^2\Big((W_{ii}-W_{ij})c_{ij}c_{ki}+(W_{jj}-W_{jk})c_{jk}c_{ij}-(W_{kk}-W_{ki})c_{ki}c_{jk}\Big). 
\end{split}
\end{equation*}
\end{example}
\begin{example}
For entropy energies, the Gamma two operator can be simpler if we select $\theta_{ij}=\frac{p_i-p_j}{U'(p_i)-U'(p_j)}$. In this case, $\eta_{ij}=p_i-p_j$ is a linear function of $p$. Then
\begin{align*}
\Gamma_2(p,f,f)&=  \frac{1}{2} \sum_{i,j,k=1}^n (f_i-f_j)^2 \left(  
\frac{\partial \theta_{ij}}{\partial p_i} (p_k-p_i)
+  \theta_{ki} + \theta_{ij}-\theta_{jk}\right).
 \end{align*}

\end{example}

In next sections, we shall focus on the effect of graph structures in these Hessian matrices, and provide the estimations for their smallest eigenvalues.     
\section{Two point space and effectiveness}\label{section3}
In this section, we consider a complete graph $K_2$ on two vertices. We prove results of mean-field information matrices on the general weight function $\theta_{12}=\theta(p_1,p_2)$ and energy function $E(p_1,p_2)$. 

\subsection{Gamma calculus on a two point space}
We have the following theorem.
\begin{theorem}
On $K_2$, we have
\begin{equation}\label{eq:K2}
    \kappa= \frac{1}{2}\left(
    \frac{\partial \theta_{12}}{\partial p_1}
    -\frac{\partial \theta_{12}}{\partial p_2}
    \right) \left(\frac{\partial E}{\partial p_1}- \frac{\partial E}{\partial p_2}\right)
    + \theta_{12}\left(
    \frac{\partial^2 E}{\partial p_1^2}
    -2\frac{\partial^2 E} {\partial p_1 \partial p_2}
    + \frac{\partial^2 E}{\partial p_2^2}
    \right).
\end{equation}
\end{theorem}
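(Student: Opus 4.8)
The plan is to use that on $K_2$ the probability simplex $M=\{(p_1,p_2):p_1+p_2=1,\ p_i\ge 0\}$ is one-dimensional, so $T_p^*M$ is spanned by a single covector and every constant-speed geodesic through an interior point $p$ moves in the same direction. Consequently $\Gamma_2(p,f,f)=\kappa(p)\,\Gamma_1(p,f,f)$ holds identically in $f$, and $\kappa(p)$ — which by the Property above equals $\kappa_1$, i.e. the number $\kappa$ in the statement — is simply the ratio $\Gamma_2(p,f,f)/\Gamma_1(p,f,f)$ for any one nonzero cotangent direction $f=(f_1,f_2)$. So it suffices to evaluate $\Gamma_1$ and $\Gamma_2$ on $K_2$ and divide.

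Evaluating $\Gamma_1$ is immediate: with the single edge $12$, $\theta_{11}=\theta_{22}=0$ and $\theta_{12}=\theta_{21}=\theta(p_1,p_2)$, the definition gives $\Gamma_1(p,f,f)=\theta_{12}(f_1-f_2)^2$. For $\Gamma_2=\frac{d^2}{dt^2}E(p(t))$ I would either specialize Theorem \ref{t2} (say formula \eqref{eq:Gamma2_1}) to $n=2$, or — equivalently and more transparently — differentiate along the geodesic directly. Take $h=0$, which is legitimate since the quadratic form is independent of $h$ and which makes the curve constant-speed by Lemma \ref{t1}; write $u=f_1-f_2$. On $K_2$ the equations \eqref{eq:geodesic} collapse to $\dot p_1=-\dot p_2=u\,\theta_{12}$ and $\dot f_1-\dot f_2=-\tfrac12u^2\big(\partial_{p_1}\theta_{12}-\partial_{p_2}\theta_{12}\big)$, where the symmetry $\theta(s,t)=\theta(t,s)$ is used to identify $\partial_{p_2}\theta_{21}$ with $\partial_{p_2}\theta_{12}$ (not with $\partial_{p_1}\theta_{12}$). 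From the computation already carried out in the proof of Theorem \ref{t2}, specialized to $n=2$, $\frac{d}{dt}E=\sum_i\partial_{p_i}E\,\dot p_i=u\,\theta_{12}\big(\partial_{p_1}E-\partial_{p_2}E\big)$. Differentiating once more by the product and chain rules, and using $\dot p_2=-\dot p_1$ so that $\frac{d}{dt}\theta_{12}=u\,\theta_{12}\big(\partial_{p_1}\theta_{12}-\partial_{p_2}\theta_{12}\big)$ and $\frac{d}{dt}\big(\partial_{p_1}E-\partial_{p_2}E\big)=u\,\theta_{12}\big(\partial_{p_1}^2E-2\partial_{p_1}\partial_{p_2}E+\partial_{p_2}^2E\big)$, the two contributions to the first-order coefficient — namely $-\tfrac12$ from $\dot f_1-\dot f_2$ and $+1$ from $\frac{d}{dt}\theta_{12}$ — combine to $+\tfrac12$, and one obtains
\[
\Gamma_2(p,f,f)=\theta_{12}u^2\Big[\tfrac12\big(\partial_{p_1}\theta_{12}-\partial_{p_2}\theta_{12}\big)\big(\partial_{p_1}E-\partial_{p_2}E\big)+\theta_{12}\big(\partial_{p_1}^2E-2\partial_{p_1}\partial_{p_2}E+\partial_{p_2}^2E\big)\Big].
\]
Dividing by $\Gamma_1(p,f,f)=\theta_{12}u^2$ cancels the common factor $\theta_{12}u^2$ and yields exactly \eqref{eq:K2}.

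The computation is short, so there is no deep obstacle; the points that need care are (i) tracking the symmetry of $\theta$ correctly, since interchanging $\partial_{p_1}\theta_{12}$ and $\partial_{p_2}\theta_{12}$ would spoil the first-order term; (ii) exploiting $p_1+p_2=1$, i.e. $\dot p_2=-\dot p_1$, so that the mixed partials of $E$ assemble into the second difference $\partial_{p_1}^2E-2\partial_{p_1}\partial_{p_2}E+\partial_{p_2}^2E$; and (iii) checking that the $h$-dependent part of $\frac{d^2}{dt^2}E$, namely $\sum_{ij}\eta_{ij}(h_i-h_j)$, drops out — on $K_2$ this is precisely the orthogonality hypothesis of Theorem \ref{t2}, and it is automatic once $h=0$. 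Existence of the constant-speed geodesic through $p$ in the prescribed direction is a one-line ODE statement and poses no difficulty.
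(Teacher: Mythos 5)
Your proposal is correct and follows essentially the same route as the paper: both compute $\Gamma_2(p,f,f)/\Gamma_1(p,f,f)$ on the one-dimensional simplex of $K_2$, the paper by specializing the formula of Theorem \ref{t2} to $n=2$ and substituting $\eta_{12}=\theta_{12}(\partial_{p_1}E-\partial_{p_2}E)$, you by carrying out the same second-derivative computation of $E(p(t))$ along the geodesic directly (which is just the $n=2$ instance of that theorem's proof). The sign bookkeeping for the first-order term ($-\tfrac12$ from $\dot f_1-\dot f_2$ plus $+1$ from $\tfrac{d}{dt}\theta_{12}$ giving $+\tfrac12$) matches the paper's simplification.
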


\begin{proof}
Theorem \ref{t2} gives the following formula of $\Gamma_2(f,f)$:
\begin{equation}
    \Gamma_2(f,f)= (f_1-f_2)^2
    \left [-\frac{1}{2} \left(
    \frac{\partial \theta_{12}}{\partial p_1}
    -\frac{\partial \theta_{12}}{\partial p_2}
    \right) \eta_{12}
    + \left(
    \frac{\partial \eta_{12}}{\partial p_1}
    -\frac{\partial \eta_{12}}{\partial p_2}
    \right) \theta_{12}
    \right].
\end{equation}
On $K_2$, both $\Gamma_2(f,f)$ and $\Gamma_1(f,f)$ are scalars. Thus,
we have 
\begin{align*}
\kappa&= \frac{\Gamma_2(f,f)}{\Gamma_1(f,f)}\\
&= -\frac{1}{2} \left(
    \frac{\partial \theta_{12}}{\partial p_1}
    -\frac{\partial \theta_{12}}{\partial p_2}
    \right) \frac{\eta_{12}}{\theta_{12}}
    + \left(
    \frac{\partial \eta_{12}}{\partial p_1}
    -\frac{\partial \eta_{12}}{\partial p_2}\right).
\end{align*}
Plugging in $\eta_{12}=\theta_{12}\left( 
\frac{\partial E}{\partial p_1}
    -\frac{\partial E}{\partial p_2}\right)$ and simplifying it, we get Equation \eqref{eq:K2}.
\end{proof}

Let $\D{12}=\frac{\partial}{\partial p_1}- \frac{\partial}{\partial p_2}$. Then we have the following formula.
\begin{equation}\label{eq:K2in}
    \kappa = \frac{1}{2}(\D{12}\theta_{12}) (\D{12}E) +\theta_{12}(\D{12}^2E).
\end{equation}
Therefore, on $K_2$, scalar $\kappa$ only depends on the values of $\theta_{12}$ and $E$ on the simplex $M=\{(p_1,p_2)\colon p_1+p_2=1, p_1,p_2\geq 0\}$, independent from the values outside $M$.

\subsection{Transport information mean}
In this subsection, we figure out a weight function, which provides us the constant curvature in a two point space. 

Consider the trivial parametric equation of $M$:
$$p_1=x,\quad p_2=1-x, \quad\quad 0\leq x \leq 1.$$
Without causing confusion, we will re-use the notation
$E$ for $E(x,1-x)$ and $\theta$ for $\theta(x,1-x)$.
Equation \eqref{eq:K2in} can be written as
$$\kappa=\frac{1}{2}\frac{d \theta}{dx} \frac{d E}{dx} +\theta\frac{d^2 E}{dx^2}.$$
\begin{proposition}[\cite{Maas2012}]
On $K_2$, the transportation distance between two points $P_1(x_1,1-x_1)$
and $P_2(x_2,1-x_2)$ in $M$ is $$\int_{x_1}^{x_2}\frac{1}{\sqrt{\theta_{12}}} dx.$$
\end{proposition}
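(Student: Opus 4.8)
The plan is to compute the length of a curve in $M$ under the Riemannian metric $g$ and then minimize over all curves joining the two points. Since $M=\{(p_1,p_2):p_1+p_2=1\}$ on $K_2$ is one-dimensional, parametrize it by $p_1=x$, $p_2=1-x$, so that a curve from $P_1$ to $P_2$ is just a monotone reparametrization of the segment $x_1\to x_2$; there is essentially a unique curve up to reparametrization, and the geodesic distance equals the total length of that segment.

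\begin{proof}
Consider any smooth curve $\gamma(t)=(p_1(t),p_2(t))$ in $M$ with $p_1+p_2=1$, so $\dot p_2=-\dot p_1$. Writing $p_1=x(t)$, the tangent vector is $\dot\gamma=\dot x\,(\tfrac{\partial}{\partial p_1}-\tfrac{\partial}{\partial p_2})=\dot x\,e_1$. To measure its length we use the velocity potential $f=(f_1,f_2)$ determined by the first geodesic equation in \eqref{eq:geodesic}: $\dot p_1=(f_1-f_2)\theta_{12}$ and $\dot p_2=(f_2-f_1)\theta_{12}$. Hence $f_1-f_2=\dot x/\theta_{12}$, and the squared speed is
\begin{equation*}
\|\dot\gamma\|_g^2=\Gamma_1(p,f,f)=\sum_{i,j=1}^2\theta_{ij}(f_i-f_j)^2=2\theta_{12}(f_1-f_2)^2=\frac{2\dot x^2}{\theta_{12}}.
\end{equation*}
Here $\theta_{12}=\theta_{12}(x,1-x)$ along the curve. (The factor $2$ reflects the unordered double-counting of the single edge $\{1,2\}$ in the sum; if one instead counts each edge once the factor disappears. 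We absorb such a normalization constant into the definition of the distance, consistently with \cite{Maas2012}.)

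Therefore the length of $\gamma$ is $\int \sqrt{2}\,|\dot x|/\sqrt{\theta_{12}(x,1-x)}\,dt$. Since the integrand depends only on $x$ and $|\dot x|\,dt=|dx|$, this length is a reparametrization invariant and equals $\sqrt{2}\int_{x_1}^{x_2}\frac{dx}{\sqrt{\theta_{12}}}$ for any curve tracing the interval monotonically; a non-monotone curve only increases the length. Because $M$ on $K_2$ is one-dimensional, every path from $P_1$ to $P_2$ must cover the interval $[x_1,x_2]$ (or $[x_2,x_1]$), so the infimum of lengths is attained by the monotone path and
\begin{equation*}
d(P_1,P_2)=\int_{x_1}^{x_2}\frac{1}{\sqrt{\theta_{12}}}\,dx,
\end{equation*}
up to the overall normalization constant, which proves the claim.
\end{proof}

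The only real subtlety is bookkeeping the normalization: the metric $\langle x,y\rangle=\sum_{ij\in E}\theta_{ij}(x_i-x_j)(y_i-y_j)$ and the $\Gamma_1$ used in \eqref{eq:geodesic} differ by whether the edge $\{1,2\}$ is summed once or twice, which is exactly the source of the $\sqrt{2}$ above; one must match the convention to that of \cite{Maas2012} so that the stated formula comes out without a spurious constant. Everything else — reducing to a one-dimensional integral, recognizing the length as reparametrization invariant, and concluding that the straight monotone path is minimizing since $\dim M=1$ — is routine. I expect the main (minor) obstacle to be making the normalization convention explicit and consistent.
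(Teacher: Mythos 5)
Your proof is correct and follows essentially the same route as the paper: both reduce to the one-dimensional identity $f_1-f_2=\dot x/\theta_{12}$ and integrate $dx/\sqrt{\theta_{12}}$ (the paper normalizes to a unit-speed geodesic and reads off the distance as elapsed time rather than minimizing length over all curves, but on the one-dimensional simplex these are the same computation). The $\sqrt{2}$ you worry about does not appear in the paper's proof because there $\Gamma_1(f,f)$ is taken to be $\theta_{12}(f_1-f_2)^2$, i.e.\ the edge $\{1,2\}$ counted once, matching the inner product $\sum_{ij\in E(G)}\theta_{ij}(x_i-x_j)(y_i-y_j)$; with that convention your computation yields the stated formula exactly and no normalization constant needs to be absorbed.
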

\begin{proof}
Note that $\Gamma_1(f,f)=\theta_{12}(f_1-f_2)^2$ is a constant.
With loss of generality, we may assume $\Gamma_1(f,f)=1$ after scaling time.  The geodesic equation on $K_2$ has a very simple form.
$$\frac{dx}{dt}=(f_1-f_2)\theta_{12}=\sqrt{\theta_{12}}.$$
It implies
\begin{equation}
    t=\int\frac{1}{\sqrt{\theta_{12}}} dx+C.
\end{equation}
With $\Gamma_1(f,f)=1$, the geodesic has constant speed 1. Thus
the transportation distance is simply just the difference of
times of two positions. The proof is finished.
\end{proof}


 \begin{theorem} \label{t:const}
 Assume that $E(p_1,p_2)$ is symmetric and concave upward on $M$. 
 Then $\kappa$ is a constant $C$ if and only if
    $$\theta_{12}=2C\frac{E(p_1,p_2)-E(\frac{1}{2}, \frac{1}{2})}
    {(\frac{\partial E}{\partial p_1}-\frac{\partial E}{\partial p_2})^2}$$
    on $M$.
  \end{theorem}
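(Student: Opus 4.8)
The plan is to reduce everything to the one-variable parametrization $p_1=x$, $p_2=1-x$ already introduced, so that by Equation~\eqref{eq:K2in} we are solving the scalar ODE
\begin{equation*}
\kappa(x)=\tfrac12\,\theta'(x)\,E'(x)+\theta(x)\,E''(x),
\end{equation*}
where $'$ denotes $d/dx$ on $M$ and I write $\theta=\theta_{12}(x,1-x)$, $E=E(x,1-x)$. The first step is to observe that the symmetry of $E$ in $(p_1,p_2)$ forces $E(x,1-x)$ to be symmetric about $x=\tfrac12$, hence $E'(\tfrac12)=0$; combined with ``concave upward'' (i.e.\ $E''>0$ on the interior), this means $E'(x)$ vanishes only at $x=\tfrac12$ and the quantity $E(x,1-x)-E(\tfrac12,\tfrac12)$ is nonnegative, so the claimed formula for $\theta_{12}$ makes sense away from $x=\tfrac12$ and extends continuously there. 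Note also $\bigl(\tfrac{\partial E}{\partial p_1}-\tfrac{\partial E}{\partial p_2}\bigr)\big|_{M}=E'(x)$, so the denominator in the statement is exactly $E'(x)^2$.

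For the ``if'' direction I would simply differentiate: set $\theta=2C\,(E-E(\tfrac12,\tfrac12))/(E')^2$ and plug into the scalar ODE. Writing $A=E-E(\tfrac12,\tfrac12)$ so that $A'=E'$, $A''=E''$, one has $\theta=2CA/(A')^2$, hence
\begin{equation*}
\theta'=2C\cdot\frac{A'(A')^2-A\cdot 2A'A''}{(A')^4}=2C\cdot\frac{A'{}^2-2AA''}{(A')^3},
\end{equation*}
and substituting into $\tfrac12\theta'E'+\theta E''=\tfrac12\theta'A'+\theta A''$ gives $C\cdot\frac{A'{}^2-2AA''}{(A')^2}+2CA''\cdot\frac{A}{(A')^2}=C\cdot\frac{A'{}^2}{(A')^2}=C$, as desired; the apparent singularity at $x=\tfrac12$ is removable by a Taylor expansion ($A\sim\tfrac12 E''(\tfrac12)(x-\tfrac12)^2$, $A'\sim E''(\tfrac12)(x-\tfrac12)$).

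For the ``only if'' direction, the key manipulation is to recognize the right-hand side of the ODE as an exact derivative. When $\kappa\equiv C$, the equation $\tfrac12\theta'E'+\theta E''=C$ can be rewritten, after multiplying by $2E'$, as $\theta'(E')^2+2\theta E' E''=2CE'$, i.e.\ $\frac{d}{dx}\bigl(\theta\,(E')^2\bigr)=2C E'=2C\frac{dA}{dx}$. Integrating from $\tfrac12$ to $x$, and using that $\theta(E')^2\to 0$ as $x\to\tfrac12$ (since $E'(\tfrac12)=0$ and $\theta$ is bounded near $x=\tfrac12$ by continuity of $\theta\in\T$), I get $\theta(x)E'(x)^2=2C\,A(x)=2C\bigl(E(x,1-x)-E(\tfrac12,\tfrac12)\bigr)$, which is exactly the asserted formula on $M\setminus\{x=\tfrac12\}$, and then on all of $M$ by continuity.

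The main obstacle — really the only delicate point — is the behavior at the symmetry point $x=\tfrac12$: both the formula for $\theta_{12}$ and the integration-constant argument involve dividing by $E'$, which vanishes there. I would handle this by the Taylor expansions above (using $E''(\tfrac12)>0$ from strict concavity-upward, so the leading terms are genuinely quadratic/linear and the ratios have finite limits), and by invoking the regularity axiom that $\theta$ is continuous on $\mathbb{R}^+\times\mathbb{R}^+$ to justify $\theta(E')^2\to 0$ and to conclude the extension of the identity across $x=\tfrac12$. If one wants to be careful about whether ``concave upward'' allows $E''(\tfrac12)=0$, one can instead argue that $A(x)>0$ for $x\neq\tfrac12$ and $\theta>0$ on the open simplex, so the identity $\theta E'^2=2CA$ determines $\theta$ uniquely wherever $E'\neq0$, and density of that set plus continuity finishes it; I would remark on this but keep the clean quadratic case as the main line.
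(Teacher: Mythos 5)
Your proof is correct and follows essentially the same route as the paper's: both arguments hinge on recognizing $g=\theta_{12}\bigl(\tfrac{\partial E}{\partial p_1}-\tfrac{\partial E}{\partial p_2}\bigr)^2$ as an antiderivative of $2C\,\tfrac{d}{dx}E$ and fixing the integration constant at the symmetric point $(\tfrac12,\tfrac12)$, the only difference being that the paper differentiates $g$ in the geodesic time $t$ while you differentiate in the parameter $x$, which is an equivalent computation. You are in fact slightly more complete, since you also verify the ``if'' direction explicitly and address the removable singularity at $x=\tfrac12$, both of which the paper leaves implicit.
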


 \begin{proof}
    Let $g:=\theta_{12}(\frac{\partial E}{\partial p_1}-\frac{\partial E}{\partial p_2})^2$. Since $p_1+p_2=1$, we have $\dot p_2=-\dot p_1$. Thus
    \begin{align*}
      \frac{d}{dt} g &= \dot{\theta}_{12}\left(\frac{\partial E}{\partial p_1}-\frac{\partial E}{\partial p_2}\right)^2+ 2 \theta_{12}\left(\frac{\partial E}{\partial p_1}-\frac{\partial E}{\partial p_2}\right)\left(
    \frac{\partial^2 E}{\partial p_1^2}
    -2\frac{\partial^2 E} {\partial p_1 \partial p_2}
    + \frac{\partial^2 E}{\partial p_2^2}
    \right)\dot p_1\\
    &= \left(\frac{\partial \theta_{12}}{\partial p_1}- \frac{\partial \theta_{12}}{\partial p_2}\right)\dot{p}_1\left(\frac{\partial E}{\partial p_1}-\frac{\partial E}{\partial p_2}\right)^2\\
    &\hspace*{4mm} + 2 \theta_{12}\left(\frac{\partial E}{\partial p_1}-\frac{\partial E}{\partial p_2}\right)\left(
    \frac{\partial^2 E}{\partial p_1^2}
    -2\frac{\partial^2 E} {\partial p_1 \partial p_2}
    + \frac{\partial^2 E}{\partial p_2^2}
    \right)\dot p_1\\
    &=2 \dot{p}_1\left(\frac{\partial E}{\partial p_1}- \frac{\partial E}{\partial p_2}\right)
    \left[
    \frac{1}{2}\left(
    \frac{\partial \theta_{12}}{\partial p_1}
    -\frac{\partial \theta_{12}}{\partial p_2}
    \right) \left(\frac{\partial E}{\partial p_1}- \frac{\partial E}{\partial p_2}\right)
    + \theta_{12}\left(
    \frac{\partial^2 E}{\partial p_1^2}
    -2\frac{\partial^2 E} {\partial p_1 \partial p_2}
    + \frac{\partial^2 E}{\partial p_2^2}
    \right)\right]\\
    &=2 \dot{p}_1\left(\frac{\partial E}{\partial p_1}- \frac{\partial E}{\partial p_2}\right) C\\
      &= 2C\frac{d}{dt} E(p).
    \end{align*}
    This implies that $g-2C E(p)=C_1$ for some constant $C_1$. 
    Since $E$ is concave upward and symmetric, it must reach
    minimum value at the middle point $(\frac{1}{2}, \frac{1}{2})$, we have
    $\frac{\partial \theta_{12}}{\partial p_1}(\frac{1}{2}, \frac{1}{2})
    -\frac{\partial \theta_{12}}{\partial p_2}(\frac{1}{2}, \frac{1}{2})=0$.
    We have
    $g(\frac{1}{2}, \frac{1}{2})=0$.
    Thus $C_1=-2C E(\frac{1}{2}, \frac{1}{2}))$.
    Therefore,
    $$\theta_{12}= 2C\frac{E(p_1,p_2)-E(\frac{1}{2}, \frac{1}{2})}{
\left(\frac{\partial E}{\partial p_1}- \frac{\partial E}{\partial p_2}\right)^2}.$$
\end{proof}

One way to extend of the function $\theta$ from $M$ to ${\mathbb{R}^+}^2$ is using 
\begin{center}
(Positive homogeneity): $\theta(\kappa s, \kappa t) = \kappa \theta(s, t)$ for $\kappa > 0$ and $s, t \geq 0$.
\end{center}
Under the assumption of Positive homogeneity, the $theta$ function in Theorem \ref{t:const} can be uniquely extended to
${\mathbb{R}^+}^2$ as
\begin{equation}\label{eq:info_mean}
    \theta(p_1,p_2)=2C(p_1+p_2)\frac{E(\frac{p_1}{p_1+p_2},\frac{p_2}{p_1+p_2})-E(\frac{1}{2}, \frac{1}{2})}
    {(\frac{\partial E}{\partial p_1}-\frac{\partial E}{\partial p_2})^2}.
\end{equation}

\begin{definition}
The function $\theta$ defined by Equation \eqref{eq:info_mean}
is called {\em transport information mean} with respect to $E$.
\end{definition}


The transport information mean provides a very simple transportation distance formula.

\begin{corollary}
When $\theta(p_1,p_2)= 2C\frac{E(p_1,p_2)-E(\frac{1}{2}, \frac{1}{2})}{
\left(\frac{\partial E}{\partial p_1}- \frac{\partial E}{\partial p_2}\right)^2}$, then the transportation distance 
on $K_2$ between $P_1(x_1,1-x_2)$ and  $P_2(x_2,1-x_2)$
is given below:
\begin{equation}
\begin{cases}
      \sqrt{\frac{2}{C}}\left(\sqrt{E(x_1,1-x_1)-E(\frac{1}{2},\frac{1}{2})}
      -\sqrt{E(x_2,1-x_2)-E(\frac{1}{2},\frac{1}{2})}\right) & 
      \mbox{ if } x_1\leq x_2\leq \frac{1}{2};\\
      \sqrt{\frac{2}{C}}\left(\sqrt{E(x_1,1-x_1)-E(\frac{1}{2},\frac{1}{2})}
      +\sqrt{E(x_2,1-x_2)-E(\frac{1}{2},\frac{1}{2})}\right) & 
      \mbox{ if } x_1\leq \frac{1}{2}\leq x_2;\\
      \sqrt{\frac{2}{C}}\left(\sqrt{E(x_2,1-x_2)-E(\frac{1}{2},\frac{1}{2})}
      -\sqrt{E(x_1,1-x_1)-E(\frac{1}{2},\frac{1}{2})}\right) 
      & 
      \mbox{ if } \frac{1}{2}\leq x_1\leq x_2.
    \end{cases}
\end{equation}
\end{corollary}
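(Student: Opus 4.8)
The plan is to combine the previous proposition (transportation distance on $K_2$ equals $\int \theta_{12}^{-1/2}\,dx$) with the explicit form of $\theta_{12}$ coming from the transport information mean. First I would substitute $\theta_{12}=2C\,\dfrac{E(x,1-x)-E(\frac12,\frac12)}{(\partial_{p_1}E-\partial_{p_2}E)^2}$, restricted to $M$, into the integrand. Writing $E(x):=E(x,1-x)$ and noting that along $M$ one has $\dfrac{dE}{dx}=\partial_{p_1}E-\partial_{p_2}E$ (since $p_2=1-x$), the integrand becomes
\[
\frac{1}{\sqrt{\theta_{12}}}=\sqrt{\frac{C}{2}}\cdot\frac{\left|\frac{dE}{dx}\right|}{\sqrt{E(x)-E(\tfrac12,\tfrac12)}}.
\]
The key observation is then that $\dfrac{d}{dx}\sqrt{E(x)-E(\tfrac12,\tfrac12)}=\dfrac{1}{2}\dfrac{E'(x)}{\sqrt{E(x)-E(\tfrac12,\tfrac12)}}$, so the integrand is, up to the constant $\sqrt{2/C}$ and a sign, an exact derivative of $\sqrt{E(x)-E(\tfrac12,\tfrac12)}$.

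Next I would handle the sign of $E'(x)$. Since $E$ is symmetric about $x=\tfrac12$ and concave upward, $E$ is decreasing on $[0,\tfrac12]$ and increasing on $[\tfrac12,1]$; hence $|E'(x)|=-E'(x)$ for $x<\tfrac12$ and $|E'(x)|=E'(x)$ for $x>\tfrac12$. This yields the three cases of the corollary: if $x_1\le x_2\le\tfrac12$ the integral $\int_{x_1}^{x_2}$ picks up a single antiderivative evaluated with a negative sign, giving $\sqrt{2/C}\bigl(\sqrt{E(x_1)-E(\tfrac12,\tfrac12)}-\sqrt{E(x_2)-E(\tfrac12,\tfrac12)}\bigr)$; if $\tfrac12\le x_1\le x_2$ the sign is reversed, giving the third formula; and if $x_1\le\tfrac12\le x_2$ one splits the integral at $\tfrac12$, where $\sqrt{E(x)-E(\tfrac12,\tfrac12)}$ vanishes, and the two pieces add, giving the middle formula with a plus sign. (One should also confirm the $+$ sign convention in the proposition's formula, i.e.\ that $\int_{x_1}^{x_2}$ should be read as $\left|\int_{x_1}^{x_2}\right|$ or that the absolute value on $E'$ is what makes the distance nonnegative; this is a cosmetic point.)

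The only genuine subtlety is the behavior of the integrand near $x=\tfrac12$ when $x_1<\tfrac12<x_2$: there both numerator $E'(x)$ and $\sqrt{E(x)-E(\tfrac12,\tfrac12)}$ vanish, so the integrand is a $0/0$ expression, and I must check the improper integral converges. But this is immediate from the antiderivative: $\sqrt{E(x)-E(\tfrac12,\tfrac12)}$ is continuous (indeed $C^0$) at $x=\tfrac12$ with value $0$, so each one-sided improper integral is finite and equals the boundary difference of this antiderivative. Thus the main obstacle is merely bookkeeping of signs across the minimum of $E$, not any analytic difficulty; everything reduces to the fundamental theorem of calculus applied to $\sqrt{E(x)-E(\tfrac12,\tfrac12)}$ on the two monotone pieces of $M$.
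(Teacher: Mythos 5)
Your argument is correct and is exactly the one the paper intends (the corollary is stated there without proof): substitute the transport-information-mean $\theta$ into the distance formula $\int \theta_{12}^{-1/2}\,dx$ from the preceding proposition, recognize the integrand as $\sqrt{2/C}$ times $\bigl|\tfrac{d}{dx}\sqrt{E(x,1-x)-E(\tfrac{1}{2},\tfrac{1}{2})}\bigr|$, and integrate over the two monotone pieces of $M$, splitting at $x=\tfrac{1}{2}$ where the antiderivative vanishes. One small slip: the intermediate constant should be $\tfrac{1}{\sqrt{2C}}$ rather than $\sqrt{C/2}$, although your final prefactor $\sqrt{2/C}$ (after absorbing the factor $2$ from differentiating the square root) is the correct one.
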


\begin{definition}
The function $\theta$ is called
{\em distance normalized} if the transportation distance between $(1,0)$ and $(0,1)$ on $K_2$ is $1$, i.e.,
$$\int_0^1 \frac{1}{\sqrt{\theta(x,1-x)}} dx =1.$$
\end{definition}
In particular, 
$\theta(p_1,p_2)= 16[E(0,1)-E(\frac{1}{2}, \frac{1}{2})]\frac{E(p_1,p_2)-E(\frac{1}{2}, \frac{1}{2})}{
\left(\frac{\partial E}{\partial p_1}- \frac{\partial E}{\partial p_2}\right)^2}$ is distance normalized.

\begin{theorem}
Assume that $E(p_1,p_2)$ is symmetric and concave upward on $M$.
For any $\theta$, the global 
Ricci curvature bound $\kappa_{min}$ satisfies $$\kappa_{min}\leq 8\frac{E(1,0)-E(\frac{1}{2},\frac{1}{2})}{
\left(\int_{0}^1 \frac{1}{\sqrt{\theta(x,1-x)}} dx\right)^2}
.$$
The equality holds if and only if $\theta$ is a transport information mean, i.e,
$$\theta(p_1,p_2)=2C \frac{E(p_1,p_2)-E(\frac{1}{2}, \frac{1}{2})}
    {(\frac{\partial E}{\partial p_1}-\frac{\partial E}{\partial p_2})^2},$$
    on $M$ for some constant $C$.
\end{theorem}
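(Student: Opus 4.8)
The plan is to reduce the global bound to a one–variable optimization along the segment $M=\{(x,1-x):0\le x\le 1\}$, and then apply the Cauchy–Schwarz inequality. Recall from Proposition (Maas) that the transportation distance between $(1,0)$ and $(0,1)$ on $K_2$ equals $\int_0^1\theta(x,1-x)^{-1/2}\,dx$, and from equation \eqref{eq:K2in} that the local curvature is $\kappa(x)=\tfrac12(\D{12}\theta_{12})(\D{12}E)+\theta_{12}(\D{12}^2E)$, which in the parametrization $p_1=x,p_2=1-x$ reads $\kappa(x)=\tfrac12\theta'(x)E'(x)+\theta(x)E''(x)$. The first observation is that, writing $g(x):=\theta(x)\bigl(E'(x)\bigr)^2$ as in the proof of Theorem \ref{t:const}, one has the exact identity $g'(x)=2\kappa(x)E'(x)$; integrating from $x=1/2$ (where $g(1/2)=0$ since $E'$ vanishes there by symmetry and concavity) gives $g(x)=2\int_{1/2}^x\kappa(s)E'(s)\,ds\le 2\kappa_{\min}\bigl(E(x)-E(1/2)\bigr)$ for every $x$, because $\kappa(s)E'(s)\le\kappa_{\min}E'(s)$ when $E'(s)\ge0$ (i.e. $s\ge1/2$) and the inequality reverses together with the sign of $E'$ for $s\le1/2$. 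Hence, pointwise on $M$,
\begin{equation*}
\theta(x)\le \frac{2\kappa_{\min}\bigl(E(x)-E(1/2)\bigr)}{\bigl(E'(x)\bigr)^2}.
\end{equation*}

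Next I would feed this pointwise bound into the distance integral. Since $E(x)-E(1/2)\ge 0$, we get $\theta(x)^{-1/2}\ge \bigl|E'(x)\bigr|\bigl(2\kappa_{\min}(E(x)-E(1/2))\bigr)^{-1/2}$, so
\begin{equation*}
\int_0^1\frac{dx}{\sqrt{\theta(x,1-x)}}\ \ge\ \frac{1}{\sqrt{2\kappa_{\min}}}\int_0^1\frac{|E'(x)|}{\sqrt{E(x)-E(1/2)}}\,dx .
\end{equation*}
The remaining integral is elementary: on $[0,1/2]$, $E'\le0$ and $\frac{d}{dx}\sqrt{E(x)-E(1/2)}=\frac{E'(x)}{2\sqrt{E(x)-E(1/2)}}$, so $\int_0^{1/2}\frac{|E'|}{\sqrt{E-E(1/2)}}=2\sqrt{E(0)-E(1/2)}$, and by symmetry the same value comes from $[1/2,1]$ with $E(1)=E(0)$. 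Thus $\int_0^1\theta^{-1/2}dx\ge \frac{1}{\sqrt{2\kappa_{\min}}}\cdot 4\sqrt{E(1,0)-E(1/2,1/2)}$, which upon squaring and rearranging is exactly $\kappa_{\min}\le 8\bigl(E(1,0)-E(1/2,1/2)\bigr)\big/\bigl(\int_0^1\theta^{-1/2}dx\bigr)^2$.

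For the equality case, equality in the pointwise step $\theta(x)\le 2\kappa_{\min}(E(x)-E(1/2))/(E'(x))^2$ for all $x\in M$ forces $\kappa(x)=\kappa_{\min}$ for a.e.\ $x$ (wherever $E'(x)\ne0$), i.e.\ $\kappa$ is the constant $C=\kappa_{\min}$ on $M$; by Theorem \ref{t:const} this is equivalent to $\theta_{12}=2C\bigl(E(p_1,p_2)-E(1/2,1/2)\bigr)\big/(\partial_{p_1}E-\partial_{p_2}E)^2$, i.e.\ $\theta$ is a transport information mean. Conversely, if $\theta$ is a transport information mean with constant $C$, then $\kappa\equiv C$, the Corollary's distance formula gives $\int_0^1\theta^{-1/2}dx=\sqrt{2/C}\cdot 2\sqrt{E(1,0)-E(1/2,1/2)}=4\sqrt{(E(1,0)-E(1/2,1/2))/(2C)}$, and plugging this in makes the asserted bound an equality with $\kappa_{\min}=C$.

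The only delicate point is the behavior of the integrands near the endpoints $x=0,1$ and near $x=1/2$: $E'(x)$ vanishes at $x=1/2$ (so $\theta$ may vanish there and $1/\sqrt\theta$ may blow up) and $E(x)-E(1/2)$ vanishes there too, so one must check that $|E'(x)|/\sqrt{E(x)-E(1/2)}$ is integrable and that the substitution $u=\sqrt{E(x)-E(1/2)}$ is legitimate; this is where concavity and the $C^\infty$ assumption on the interior are used (near $x=1/2$, $E(x)-E(1/2)\sim \tfrac12 E''(1/2)(x-1/2)^2$ so the ratio tends to a finite limit $\sqrt{2E''(1/2)}$, and near the endpoints the integral $\int_0^1\theta^{-1/2}$ is assumed finite in the statement). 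Handling the case $E''(1/2)=0$ (or $E'$ vanishing to higher order) by a limiting/monotone-convergence argument is the one technical wrinkle worth spelling out carefully.
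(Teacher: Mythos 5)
Your strategy is exactly the paper's: you introduce the same auxiliary quantity $g=\theta_{12}\bigl(\frac{\partial E}{\partial p_1}-\frac{\partial E}{\partial p_2}\bigr)^2$, use the identity $g'=2\kappa\,E'$ from the proof of Theorem \ref{t:const}, integrate from the midpoint where $g$ vanishes, convert the result into a bound on $\int_0^1\theta^{-1/2}dx$, and settle equality via Theorem \ref{t:const}. The only cosmetic differences are that you parametrize by $x$ rather than by arc length along a unit-speed geodesic and you do not pre-normalize $\theta$; your remarks on integrability near $x=\tfrac12$ and the endpoints are a welcome addition the paper omits.

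However, your inequalities run in the wrong direction at two places, and as written the argument derives the \emph{reverse} of the theorem. Since $\kappa_{\min}$ is the global \emph{lower} bound, $\kappa(s)\ge\kappa_{\min}$, so for $s\ge\tfrac12$ (where $E'\ge0$) one has $\kappa(s)E'(s)\ge\kappa_{\min}E'(s)$, not $\le$ as you claim. The correct conclusion of the integration is therefore $g(x)\ge 2\kappa_{\min}\bigl(E(x)-E(\tfrac12)\bigr)$, i.e.\ $\theta(x)\ge 2\kappa_{\min}(E(x)-E(\tfrac12))/(E'(x))^2$ pointwise, whence $\theta^{-1/2}\le |E'|/\sqrt{2\kappa_{\min}(E-E(\tfrac12))}$ and $\int_0^1\theta^{-1/2}dx\le 4\sqrt{E(1,0)-E(\tfrac12,\tfrac12)}/\sqrt{2\kappa_{\min}}$; it is this \emph{upper} bound on the distance that rearranges to $\kappa_{\min}\le 8(E(1,0)-E(\tfrac12,\tfrac12))/\bigl(\int_0^1\theta^{-1/2}dx\bigr)^2$. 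Your chain instead ends with $\int_0^1\theta^{-1/2}dx\ge\cdots$, which upon squaring and rearranging gives $\kappa_{\min}\ge\cdots$, the opposite inequality; the claim that it ``is exactly'' the stated bound is a second, compensating error. Both are repaired by flipping the first comparison, after which the proof coincides with the paper's. You should also dispose of the case $\kappa_{\min}\le 0$ separately (as the paper does), since the square roots of $2\kappa_{\min}(\cdots)$ appearing in the pointwise step are otherwise meaningless.
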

\begin{proof}
If $\kappa_{min}\leq 0$, the assertion holds trivially. We may assume $\kappa_{min}>0$.
If $\theta$ is scaled by a constant factor $C$, then both sides of inequalities are scaled by a factor of $C$. Without loss of generality, we may assume $\theta$ is normalized, i.e.,
$$\int_{0}^1 \frac{1}{\sqrt{\theta(x,1-x)}} dx=1.$$
There is a unit-speed geodesic $\gamma(t)$ with $\gamma(0)=(0,1)$ and $\gamma(1)=(1,0)$.
Similar to the proof of Theorem \ref{t:const}, set $g=\theta_{12}(\frac{\partial E}{\partial p_1}-\frac{\partial E}{\partial p_2})^2$. By symmetry of $\theta$ and $E$, we have
$\gamma(\frac{1}{2})=(\frac{1}{2},\frac{1}{2})$. Note that
$E(\gamma(t))$ is increasing on $[\frac{1}{2}, 1]$.
For $t\in [\frac{1}{2}, 1]$, we have
\begin{align*}
g(\gamma(t))-g(\gamma(\frac{1}{2})) &= \int_{\frac{1}{2}}^t
\frac{d g(\gamma(t))}{dt} dt\\
&= \int_{\frac{1}{2}}^t 2\kappa \frac{d E(\gamma(t)}{dt} dt\\
&\geq 2\kappa_{min} \int_{\frac{1}{2}}^t  \frac{d E(\gamma(t)}{dt} dt\\
&= 2\kappa_{min} \left(E(\gamma(t))- E(\gamma(\frac{1}{2}))\right).
\end{align*}
This implies, for any $t\in [\frac{1}{2},1]$, we have
$$\frac{1}{\sqrt{\theta(\gamma(t))}} \leq \frac{1}{\sqrt{2\kappa_{min}}} \frac{\frac{d E(\gamma(t))}{dt}}{\sqrt{E(\gamma(t))- E(\gamma(\frac{1}{2})}}.$$
Integrate both side from $\frac{1}{2}$ to $1$. We get
\begin{align*}
    \frac{1}{2}&=\int_{\frac{1}{2}}^1 \frac{1}{\sqrt{\theta(\gamma(t))}} dt\\
    &\leq \int_{\frac{1}{2}}^1 \frac{1}{\sqrt{2\kappa_{min}}} \frac{\frac{d E(\gamma(t))}{dt}}{\sqrt{E(\gamma(t))- E(\gamma(\frac{1}{2})}}dt\\
    &= \frac{2}{\sqrt{2\kappa_{min}}} \left(\sqrt{E(\gamma(1))- E(\gamma(\frac{1}{2}))}\right).
\end{align*}
This implies
$$\kappa_{min}\leq 8\left(E(1,0)-E(\frac{1}{2},\frac{1}{2})\right).$$
Equality holds if and only if $\kappa$ is a constant. Thus
$\theta$ must be the information transportation mean.
\end{proof}

\begin{definition}
For a fixed symmetric and concave upward energy function $E$, the effectiveness of an $\theta$ function (on $K_2$ relate to $E$) is defined as
$$EFCT(\theta)=
\frac{\kappa_{min}\left(\int_{0}^1 \frac{1}{\sqrt{\theta(x,1-x)}} dx\right)^2}{
8\left(E(1,0)-E(\frac{1}{2},\frac{1}{2})\right)
}.$$
\end{definition}
\subsection{Effectiveness on negative Boltzman-Shannon Entropy}
In this subsection, we choose $E(P)$ be the negative Boltzmann-Shannon entropy function with base $e$:
$$E(p)=-H(p)=p_1\log p_1 + p_2 \log p_2.$$
Then $E(p)$ is symmetric and concave upward with
$E(0,1)=E(1,0)=0$ and $E(\frac{1}{2},\frac{1}{2})=-\ln(2).$ Towards this energy function, we demonstrate the effectiveness of a $\theta$ function. 

\subsubsection{Transport information mean}
Consider 
$$\theta=16\ln (2) \frac{\left(p_i\log p_i+p_j\log p_j - (p_i+p_j)\log \frac{p_i+p_j}{2}\right)}
{(\log p_i -\log p_j)^2}.$$ 
The transportation distance from $(0,1)$ to $(1,0)$ is $1$.
The curvature is $8\ln(2)$ with the effectiveness 100\%.
 
 \subsubsection{Arithmetic mean}
  Consider 
 $$\theta_{ari}=\frac{p_1+p_2}{2}.$$
 Then the local Ricci curvature bound on $K_2$ is
 $$\kappa(p_1,p_2)= \frac{1}{2p_1p_2}.$$
 The transportation distance from $(0,1)$ to $(1,0)$ is $\sqrt{2}$.
 Then, the global Ricci curvature bound on $K_2$ is
 $\kappa_{min}=2.$
 The effectiveness of $\theta_{alg}$ is 
 $$EFCT(\theta_{ari})=\frac{1}{2\ln(2)}\approx 72.13475205.$$
 
 \subsubsection{Geometric mean}
  Consider 
 $$\theta_{geo}=\sqrt{p_1p_2}.$$
 Then the local Ricci curvature bound on $K_2$ is
 $$\kappa(p_1,p_2)= \frac{1}{\sqrt{p_1p_2}}-\frac{(p_1-p_2)(\log p_1 -\log p_2)}{4 \sqrt{p_1p_2}}.$$
 The transportation distance from $(0,1)$ to $(1,0)$ is $\frac{2\Gamma(3/4)^2}{\sqrt{\pi}}\approx 1.694426169$.
 Then, the global Ricci curvature bound on $K_2$ is
 $\kappa_{min}=-\infty.$
 The effectiveness of $\theta_{alg}$ is 
 $$EFCT(\theta_{geo})=-\infty.$$

 \subsubsection{Logarithmic mean}
 Consider 
 $$\theta_{log}=\frac{p_1-p_2}{\log p_1 - \log p_2}.$$
 The transportation distance from $(0,1)$ to $(1,0)$ is
 $1.558707451\ldots$.
 The local Ricci curvature bound on $K_2$ is
 \begin{equation}\label{lmlocal}
      \kappa(p_1,p_2)= 1+ \frac{p_1^2-p_2^2}{2(\log p_1 -\log p_2)p_1p_2}.
 \end{equation}
 Then, the global Ricci curvature bound on $K_2$ is
 $\kappa_{min}=2.$
 The effectiveness of the logarithmic mean is
 $$EFCT(\theta_{log})=\frac{2}{8\ln(2)}\left(\int_0^1\sqrt{\frac{\log x -\log (1-x)}{x-(1-x)}} dx\right)^2\approx 87.62817572\%.$$

  \subsubsection{Classical spectral graph mean}
  Consider 
\begin{equation*}
    \theta_{sg}=\frac{(\sqrt{p_1}-\sqrt{p_2})^2}{(\log p_1-\log p_2)^2}.
\end{equation*}
 Then the local Ricci curvature bound on $K_2$ is
 $$\kappa(p_1,p_2)= \frac{p_1-p_2}{2\sqrt{p_1p_2}(\log p_1 -\log p_2)}.$$
 Then, the global Ricci curvature bound on $K_2$ is
 $\kappa_{min}=\frac{1}{2}.$
 The transportation distance from $(0,1)$ to $(1,0)$ is
 $3.232504051\ldots$.
 The effectiveness of the spectral graph mean is
 $$EFCT(\theta_{sg})=\frac{\frac{1}{2}}{8\ln(2)}\left(\int_0^1
 \frac{\log x -\log (1-x)}{\sqrt{x}-\sqrt{1-x}} dx\right)^2\approx 94.21774637\%.$$

\section{$C_4$-property and graph product}\label{section4}
For any edge $ij$ in a graph $G$, let 
\begin{equation}
    \Gamma^{ij}_2(f,f)= (f_i-f_j)^2
    \left [-\frac{1}{2} \left(
    \frac{\partial \theta_{ij}}{\partial p_i}
    -\frac{\partial \theta_{ij}}{\partial p_j}
    \right) \eta_{ij}
    + \left(
    \frac{\partial \eta_{ij}}{\partial p_i}
    -\frac{\partial \eta_{ij}}{\partial p_j}
    \right) \theta_{ij}
    \right].
\end{equation}

\begin{definition}
For any graph $G$ and energy function $E$,  we call that function $\theta$ has {\em  $G$-property} with respect to $E$ if 
$$\Gamma^G_2(f,f)\geq \sum_{ij\in E(G)}\Gamma^{ij}_2(f,f),$$
 holds for any constant-speed geodesics on $M$.
\end{definition}

Given two graphs $G$ and $H$, the Cartesian product of graph $G\square H$, is a new graph with the vertex set $V(G)\times V(H)$ and the edges sets consisting of all pairs $((u_1, v_1), (u_2,v_2))$ if 
\begin{enumerate}
    \item $u_1u_2\in E(G)$ and $v_1=v_2$.
    \item $u_1=u_2$ and $v_1v_2\in E(H)$.
\end{enumerate}

\begin{theorem}\label{graphproduct}
Suppose that $\theta$ has $C_4$-property with respect to $E$. Then for any two graphs $G$ and $H$,
we have
$$\kappa^{G\square H}_0\geq \min\{\kappa^G_0, \kappa^H_0\}.$$
\end{theorem}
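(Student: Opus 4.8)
The plan is to reduce the product estimate to a per-coordinate estimate using the $C_4$-property. The key structural fact we need is that a constant-speed geodesic on the simplex $M(G\square H)$ decomposes, in a suitable sense, into geodesic data along the $G$-fibers and along the $H$-fibers, and that the metric weights $\theta$ on $G\square H$ restrict correctly to these fibers. Since the Cartesian product graph has the property that every edge is either a ``$G$-edge'' (of the form $((u_1,v),(u_2,v))$ with $u_1u_2\in E(G)$) or an ``$H$-edge'' (of the form $((u,v_1),(u,v_2))$ with $v_1v_2\in E(H)$), the edge set $E(G\square H)$ partitions as $E_G\sqcup E_H$. I would first use the $C_4$-property for $\theta$ on $G\square H$ with respect to $E$ to write
\begin{equation*}
\Gamma^{G\square H}_2(f,f)\ \geq\ \sum_{ij\in E(G\square H)}\Gamma^{ij}_2(f,f)\ =\ \sum_{ij\in E_G}\Gamma^{ij}_2(f,f)\ +\ \sum_{ij\in E_H}\Gamma^{ij}_2(f,f).
\end{equation*}
Here one must check that the $C_4$-property is actually available on $G\square H$: every $4$-cycle arising from the product construction is of the form $u_1u_2\in E(G)$, $v_1v_2\in E(H)$, and one shows $\theta$ inherits the $C_4$-property on $G\square H$ from its $C_4$-property relative to $E$ (this is really the hypothesis, applied on the product graph).

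Next I would bound each group of terms below by the corresponding $\Gamma_1$ contribution times $\kappa^G_0$ (resp. $\kappa^H_0$). For a fixed value $v\in V(H)$, the $G$-edges incident to the fiber $\{(u,v):u\in V(G)\}$ carry weights $\theta_{(u_1,v)(u_2,v)}$ which, along that fiber, reproduce exactly the weighted metric on a copy of $G$; and the restriction of the energy $E$ and of $f$ to that fiber gives admissible geodesic data on $G$ after the appropriate normalization. Summing the definition of $\kappa^G_0$ over the fibers $v\in V(H)$ yields
\begin{equation*}
\sum_{ij\in E_G}\Gamma^{ij}_2(f,f)\ \geq\ \kappa^G_0 \sum_{ij\in E_G}\theta_{ij}(f_i-f_j)^2,
\end{equation*}
and symmetrically for $H$. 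Adding the two inequalities and using $\Gamma_1(f,f)=\sum_{ij\in E_G}\theta_{ij}(f_i-f_j)^2+\sum_{ij\in E_H}\theta_{ij}(f_i-f_j)^2$ gives
\begin{equation*}
\Gamma^{G\square H}_2(f,f)\ \geq\ \min\{\kappa^G_0,\kappa^H_0\}\,\Gamma_1(f,f),
\end{equation*}
which is precisely $\kappa^{G\square H}_0\geq\min\{\kappa^G_0,\kappa^H_0\}$ by definition of the global Ricci curvature bound.

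The main obstacle I expect is the fiber-restriction step: one must verify carefully that $\Gamma^{ij}_2(f,f)$ on a product-graph edge, which a priori involves partial derivatives $\partial_{p_i}\theta_{ij}$ and $\partial_{p_i}\eta_{ij}$ with respect to the full set of $|V(G)||V(H)|$ coordinates, actually depends only on the two coordinates $p_i,p_j$ appearing in that edge — this is the content of the remark after equation \eqref{eq:K2in} that on $K_2$ the scalar $\kappa$ depends only on the values of $\theta_{12}$ and $E$ restricted to the edge, and one needs its analog here. If $\theta_{ij}=\theta(p_i,p_j)$ depends only on the two endpoint coordinates (as in the unweighted setup) this is immediate; the genuine subtlety is that $\eta_{ij}=\theta_{ij}(\partial_{p_i}E-\partial_{p_j}E)$ and the derivative $\partial_{p_i}\eta_{ij}$ brings in second derivatives of $E$, so one needs that $E$ on $G\square H$ splits additively along the two factor directions (e.g.\ $E(p)=E_G+E_H$ for separable energies, or more precisely that the mixed second derivatives across a $G$-edge and an $H$-edge vanish) for the $\Gamma^{ij}_2$ restricted to a $G$-fiber to coincide with the $\Gamma^{ij}_2$ computed intrinsically on $G$. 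I would state this additivity of $E$ as a standing hypothesis (or note it is automatically satisfied for the linear, interaction with block-structured $W$, and separable-entropy examples of Corollary \ref{Col1}), and then the rest is the bookkeeping above.
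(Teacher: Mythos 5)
Your high-level idea --- split the product Hessian into per-factor contributions and use the $C_4$-property to discard the cross terms --- is the right one, but the execution has a genuine gap in how the curvature bounds $\kappa^G_0$, $\kappa^H_0$ get re-applied. The definition of $\kappa^G_0$ is the inequality $\Gamma^{G}_2(f,f)\geq \kappa^G_0\,\Gamma^{G}_1(f,f)$ for the \emph{full} Hessian form on $G$, which is a triple sum containing a term for every path $j\sim i\sim k$ in $G$; it is not an edge-by-edge statement. Your chain first replaces $\Gamma^{G\square H}_2$ by the edge-wise sum $\sum_{ij\in E(G\square H)}\Gamma^{ij}_2$ and then tries to bound each group $\sum_{ij\in E_G}\Gamma^{ij}_2$ below by $\kappa^G_0\sum_{ij\in E_G}\theta_{ij}(f_i-f_j)^2$. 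That step does not follow: the only relation a ``$G$-property'' supplies between $\Gamma^{G}_2$ and $\sum_{ij\in E(G)}\Gamma^{ij}_2$ is $\Gamma^{G}_2\geq\sum_{ij}\Gamma^{ij}_2$, which goes the wrong way --- a lower bound on $\Gamma^{G}_2$ gives no lower bound on the smaller quantity $\sum_{ij}\Gamma^{ij}_2$. (What your argument would actually yield is a bound in terms of the two-point curvatures of individual edges, which is the content of Corollary \ref{col3} for hypercubes, not the stated theorem for general $G$ and $H$.) A second, related problem: your opening inequality $\Gamma^{G\square H}_2\geq\sum_{ij\in E(G\square H)}\Gamma^{ij}_2$ is precisely the ``$G\square H$-property,'' which is strictly stronger than the hypothesized $C_4$-property and is nowhere established.

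The paper avoids both issues by decomposing $\Gamma^{G\square H}_2$ \emph{exactly}, not by inequality: the triple sum over $i,j,k\in V(G)\times V(H)$ splits into (a) triples lying in a single fiber $G\times\{v\}$, (b) triples lying in a single fiber $\{u\}\times H$, and (c) mixed triples, which necessarily live inside a product four-cycle $u_1u_2\square v_1v_2$. Groups (a) and (b) reassemble into the full fiber Hessians $\sum_v\Gamma^{G\times\{v\}}_2+\sum_u\Gamma^{\{u\}\times H}_2$, to which the definitions of $\kappa^G_0$ and $\kappa^H_0$ apply directly, while group (c) equals $\sum F(u_1,u_2,v_1,v_2)$ with $F=\Gamma^{C_4}_2-\sum_{ij\in E(C_4)}\Gamma^{ij}_2\geq 0$ exactly by the $C_4$-property. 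Your closing concern about whether $\Gamma^{ij}_2$ restricted to a fiber depends only on the coordinates in that fiber is legitimate (and is implicitly assumed in the paper's proof as well), but it is not the main obstruction; the misapplication of $\kappa^G_0$ to the edge-wise sum is.
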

\begin{proof}
Assume $\kappa_0$ is the minimum of $\kappa^G_0$ and $\kappa^H_0$. It is sufficient to show $\kappa^{G\square H}_0\geq \kappa_0$.

For any vertex $v\in V(H)$, let $G\times \{v\}$ be the induced subgraph of $G\square H$ on the vertex set $V(G)\times \{v\}$.
For any vertex $u\in V(G)$, let $\{u\}\times H$ be the induced subgraph of $G\square H$ on the vertex set $\{u\} \times V(H)$.
For any edge $u_1u_2\in E(G)$ and $v_1v_2 \in E(H)$, let $C_4:=u_1u_2\square v_1v_2$ be the induced subgraph of $G\square H$ on the four vertices
$\{(u_i,v_j)\colon i,j=1,2\}$.  We now consider the expression of $\Gamma^{G\square H}_2(f,f)$, all nonzero terms are divided into three groups:
\begin{enumerate}
    \item Three vertices $i,j,k$ are all in some $G\times \{v\}$ for some vertex $v\in V(H)$.
    \item Three vertices $i,j,k$ are all in some $\{u\}\times H$ for some vertex $u\in V(G)$.
    \item Three vertices $i,j,k$ are in $C_4:=u_1u_2\square v_1v_2$ for some edge $u_1u_2\in E(G)$ and $v_1v_2 \in E(H)$. In this case, let $F(u_1,u_2, v_1,v_2)$ denote the difference (in this copy of $C_4$) $$\Gamma^{C_4}(f|_{V(C_4)}, f|_{V(C_4)}) - \sum_{ij\in E(C_4)}\Gamma^{ij}_2(f|_{V(C_4)}, f|_{V(C_4)}).$$
\end{enumerate}

We have
\begin{align*}
    \Gamma^{G\square H}_2(f,f) &= \frac{1}{2} \sum_{i,j,k\in V(G)\times V(H)} (f_i-f_j)^2 
\frac{\partial \theta_{ij}}{\partial p_i} \eta_{ki}
+  \sum_{i,j,k\in V(G)\times V(H)} (f_i-f_j)(f_i-f_k)
    \frac{\partial \eta_{ij}}{\partial p_i} \theta_{ki} \\
&=  \sum_{u\in V(G)} \Gamma^{\{u\}\times H}_2(f|_{\{u\}\times H},f|_{\{u\}\times H}) + \sum_{v\in V(H)} \Gamma^{G\times\{v\}}_2(f|_{G\times\{v\}},f|_{G\times\{v\}})\\
&\hspace*{1cm} + \sum_{u_1u_2\in E(G), v_1v_2\in E(H)} F(u_1,u_2,v_1,v_2)\\
&\geq  \sum_{u\in V(G)} \Gamma^{\{u\}\times H}_2(f|_{\{u\}\times H},f|_{\{u\}\times H}) + \sum_{v\in V(H)} \Gamma^{G\times\{v\}}_2(f|_{G\times\{v\}},f|_{G\times\{v\}})\\
&\geq  \sum_{u\in V(G)} \kappa_0 \Gamma^{\{u\}\times H}_1(f|_{\{u\}\times H},f|_{\{u\}\times H}) + \sum_{v\in V(H)} \kappa_0 \Gamma^{G\times\{v\}}_1(f|_{G\times\{v\}},f|_{G\times\{v\}})\\
&= \kappa_0 \Gamma_1(f,f).
\end{align*}
Therefore, $\kappa^{G\square H}(p)\geq \kappa_0$.
\end{proof}

\begin{theorem}\label{tC4prop1}
Suppose that $\theta$ function is 1-homogenous, convex, and satisfying $\theta(p_i,p_j)=\frac{p_i-p_j}{\frac{\partial E}{\partial p_i}- \frac{\partial E}{\partial p_j}}$.
Then $\theta$ has the $C_4$-property with respect to $E$.
\end{theorem}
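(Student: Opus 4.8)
The plan is to establish the $C_4$-property by a single direct pointwise computation. Fix a copy of $C_4$ with vertices $1,2,3,4$ in cyclic order, so the edges are $12,23,34,41$ and the two diagonals $13,24$ are non-edges on which $\theta$ and all its partials vanish; write $\mathrm{opp}(ij)$ for the edge of $C_4$ opposite to $ij$. The goal is to show
\[
F(f):=\Gamma^{C_4}_2(f,f)-\sum_{ij\in E(C_4)}\Gamma^{ij}_2(f,f)\ \ge\ 0
\]
as a quadratic form in $f=(f_1,f_2,f_3,f_4)$, for all $p_1,p_2,p_3,p_4>0$ (no normalisation is needed: inside a product graph the four vertices of a $C_4$ carry unrelated probabilities, and since the coefficients of $\Gamma_2$ do not depend on $h$ it suffices to argue at the level of quadratic forms). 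The hypothesis on $(\theta,E)$ says precisely that $\eta_{ij}=p_i-p_j$ on each edge, hence $\partial_{p_i}\eta_{ij}=1$, $\partial_{p_j}\eta_{ij}=-1$, and every other first partial of $\eta$ vanishes. Substituting this into the representation $\Gamma_2(f,f)=\sum_{ij}a_{ij}(f_i-f_j)^2$ from Theorem \ref{t2} turns every $\partial\eta$-factor into a constant; and because $C_4$ is triangle-free, every three-index term that would require a vertex adjacent to both endpoints of an edge drops out.

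After this simplification, the bookkeeping should give $a_{13}=a_{24}=-\tfrac12\Sigma$ with $\Sigma:=\sum_{ij\in E(C_4)}\theta_{ij}>0$, while for each edge $ij$, reading the cycle through it as $i\!-\!j\!-\!m\!-\!l\!-\!i$, Euler's identity $p_i\partial_{p_i}\theta_{ij}+p_j\partial_{p_j}\theta_{ij}=\theta_{ij}$ (from $1$-homogeneity of $\theta$) reduces $a_{ij}$ to $b_{ij}+\tfrac12\bigl(P_{ij}+\Sigma-\theta_{\mathrm{opp}(ij)}\bigr)$, where $b_{ij}=-\tfrac12(\partial_{p_i}\theta_{ij}-\partial_{p_j}\theta_{ij})(p_i-p_j)+2\theta_{ij}$ is the single-edge coefficient (so $\Gamma^{ij}_2(f,f)=b_{ij}(f_i-f_j)^2$, as in Section \ref{section4}), $P_{ij}:=p_l\,\partial_{p_i}\theta_{ij}+p_m\,\partial_{p_j}\theta_{ij}$ is the ``transverse'' contraction of $\nabla\theta_{ij}$ with the probabilities of the two far neighbours, and $\mathrm{opp}(ij)=lm$. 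Feeding these into $F$ and using the elementary identity
\[
\sum_{ij\in E(C_4)}(f_i-f_j)^2=(f_1-f_3)^2+(f_2-f_4)^2+(f_1-f_2+f_3-f_4)^2
\]
to absorb the two diagonal coefficients, one is left (writing $s:=f_1-f_2+f_3-f_4$, $c_{ij}:=P_{ij}-\theta_{\mathrm{opp}(ij)}$) with
\[
2F(f)=\Sigma\, s^2+\sum_{ij\in E(C_4)}c_{ij}(f_i-f_j)^2.
\]

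Since $\Sigma>0$, it then suffices to show $c_{ij}\ge0$ for each edge. For the edge $12$ (so $l=4$, $m=3$, $\mathrm{opp}(12)=34$) this reads $\theta(p_3,p_4)\le p_4\,\partial_{p_1}\theta(p_1,p_2)+p_3\,\partial_{p_2}\theta(p_1,p_2)$, and cyclically for the other three edges. This is where the remaining hypothesis on $\theta$ enters: for a $1$-homogeneous $\theta$, Euler's identity upgrades the first-order supporting-hyperplane inequality to $\theta(x)\le\nabla\theta(y)\cdot x$ for all $x,y\in(0,\infty)^2$ — which, given $1$-homogeneity, is exactly the concavity of $\theta$ (equivalently $\partial_s\partial_t\theta\ge0$). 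Applying it with $y=(p_1,p_2)$ and $x=(p_4,p_3)$ and then using the symmetry $\theta(p_4,p_3)=\theta(p_3,p_4)$ to line up the coordinates gives $c_{12}\ge0$, and the same argument gives $c_{23},c_{34},c_{41}\ge0$. Hence $2F(f)\ge0$, i.e.\ $\theta$ has the $C_4$-property.

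The final inequality is not the hard part — once the reduced form is in hand, $c_{ij}\ge0$ is a one-line consequence of $1$-homogeneity, concavity and symmetry of $\theta$. The real work is in the two structural steps leading to it: first, the elementary but error-prone bookkeeping that turns the triple sum on the labelled $4$-cycle into $2F=\Sigma s^2+\sum c_{ij}(f_i-f_j)^2$, where one must track signs and keep the two diagonal vertices straight; and second, spotting the right grouping — namely that the transverse contraction $P_{ij}$ is precisely the quantity bounded by the homogeneous concavity inequality, once the symmetry of $\theta$ is invoked. Without $1$-homogeneity (to apply Euler) and the correct sign of $\partial_s\partial_t\theta$, the coefficients $c_{ij}$ need not be nonnegative — indeed taking $f_1=f_3$, $f_2=f_4$, $f_1\ne f_2$ (so $s=0$) isolates $\sum c_{ij}(f_i-f_j)^2$ and shows the inequality is sharp at this level.
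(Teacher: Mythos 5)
Your argument is correct and follows essentially the same route as the paper's proof: after reducing to $\eta_{ij}=p_i-p_j$, both isolate the $(f_1-f_2+f_3-f_4)^2$ term with coefficient $\tfrac12\sum_{ij\in E(C_4)}\theta_{ij}$ and reduce the remaining edge coefficients to exactly the inequality $p_l\,\partial_{p_i}\theta_{ij}+p_m\,\partial_{p_j}\theta_{ij}\ge\theta_{lm}$ for the opposite edge $lm$, which the paper obtains from the Erbar--Maas lemma and you re-derive from Euler's identity plus the supporting-hyperplane inequality for a concave $1$-homogeneous $\theta$. Note that, like the paper's own proof (which invokes a lemma stated for \emph{concave} $\theta$), your argument genuinely needs concavity, the word ``convex'' in the theorem statement notwithstanding.
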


The following Lemma was proved by Erbar and Maas.

\begin{lemma}[Erbar-Mass \cite{Maas2012}, Lemma 2.2] \label{Erbar-Mass-Lemma}
Supppose that $\theta$ is positive, homogeneous of degree one, and concave. Then for all $s, t, u, v > 0$, we have
\begin{align}
\label{eq:st}
    s \frac{\partial \theta(s,t)}{\partial s} + t\frac{\partial \theta(s,t)}{\partial t} &=\theta(s,t),\\
   s \frac{\partial \theta(u,v)}{\partial u} + t\frac{\partial \theta(u,v)}{\partial v} &\geq \theta(s,t).
   \label{eq:uv}
\end{align}
\end{lemma}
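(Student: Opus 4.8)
The plan is to derive the two assertions from the three hypotheses separately: equation \eqref{eq:st} is a pure consequence of one-homogeneity (Euler's relation), while equation \eqref{eq:uv} combines the supporting-hyperplane inequality for concave functions with \eqref{eq:st}. Both steps rely on the fact that $\theta$ is smooth on the open quadrant, so all partial derivatives in the statement are well defined for $s,t,u,v>0$.

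First I would prove \eqref{eq:st} by differentiating the homogeneity identity $\theta(\lambda s,\lambda t)=\lambda\,\theta(s,t)$ with respect to $\lambda$. Since $\theta$ is $C^\infty$ on $(0,\infty)\times(0,\infty)$, the chain rule gives
$$s\,\frac{\partial \theta}{\partial s}(\lambda s,\lambda t)+t\,\frac{\partial \theta}{\partial t}(\lambda s,\lambda t)=\theta(s,t),$$
and evaluating at $\lambda=1$ yields \eqref{eq:st} for all $s,t>0$.

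Next, for \eqref{eq:uv}, I would invoke the first-order characterization of concavity: a differentiable concave function lies below each of its tangent planes, so for any two points $(s,t)$ and $(u,v)$ in the open quadrant,
$$\theta(s,t)\leq \theta(u,v)+\frac{\partial \theta(u,v)}{\partial u}\,(s-u)+\frac{\partial \theta(u,v)}{\partial v}\,(t-v).$$
Expanding the right-hand side and separating the terms linear in $(u,v)$ gives
$$\theta(s,t)\leq s\,\frac{\partial \theta(u,v)}{\partial u}+t\,\frac{\partial \theta(u,v)}{\partial v}+\Big[\theta(u,v)-u\,\frac{\partial \theta(u,v)}{\partial u}-v\,\frac{\partial \theta(u,v)}{\partial v}\Big].$$
By the Euler relation \eqref{eq:st} applied at the point $(u,v)$, the bracketed quantity is exactly $\theta(u,v)-\theta(u,v)=0$, so what remains is precisely \eqref{eq:uv}.

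There is no genuine obstacle here; both steps are classical. The only point demanding care is the justification of the supporting-hyperplane inequality, which holds because a smooth concave function's graph lies below its tangent plane at every interior point — a standard fact that uses concavity together with differentiability on the open domain. Since the inequality is invoked only for strictly positive arguments, the smoothness guaranteed by the regularity axiom of $\T$ suffices throughout, and no boundary behavior of $\theta$ enters the argument.
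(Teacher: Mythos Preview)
Your proof is correct and is the standard argument: Euler's relation from one-homogeneity yields \eqref{eq:st}, and the tangent-plane inequality for concave functions together with \eqref{eq:st} at $(u,v)$ yields \eqref{eq:uv}. The paper itself does not supply a proof but simply cites the result from Erbar--Maas \cite{Maas2012}, so there is nothing further to compare.
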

We now are ready to prove Theorem \ref{tC4prop1}.
\begin{proof}
Since $\theta(p_i,p_j)=\frac{p_i-p_j}{\frac{\partial E}{\partial p_i}- \frac{\partial E}{\partial p_j}}$, we have $\eta(p_i,p_j)=p_i-p_j$. We have
\begin{align*}
 &\qquad  \Gamma^{C_4}_2(f,f)-\sum_{ij\in E(C_4)}\Gamma^{ij}_2(f,f)\\
 &= \frac{1}{2}\sum_{i=1}^4 (f_i-f_{i+1})^2 \left( \frac{\partial \theta_{i,i+1}}{\partial p_i} \eta_{i-1,i}- \frac{\partial \theta_{i,i+1}}{\partial p_{i+1}} \eta_{i+1,i+2}\right) \\
 &+ \sum_{i=1}^4 (f_i-f_{i-1})(f_i-f_{i+1}) \left( \frac{\partial \eta_{i,i+1}}{\partial p_i} \theta_{i,i-1}{+}\frac{\partial \eta_{i,i-1}}{\partial p_{i}} \theta_{i,i+1}\right) \\
 &=\frac{1}{2}\sum_{i=1}^4 (f_i-f_{i+1})^2 \left( \frac{\partial \theta_{i,i+1}}{\partial p_i} \eta_{i-1,i}- \frac{\partial \theta_{i,i+1}}{\partial p_{i+1}} \eta_{i+1,i+2} \right) \\
 &+ \sum_{i=1}^4  (f_i-f_{i-1})(f_i-f_{i+1}) (\theta_{i,i-1}+\theta_{i,i+1})\\
 &= \frac{1}{2}\sum_{i=1}^4 (f_i-f_{i+1})^2 \left( \frac{\partial \theta_{i,i+1}}{\partial p_i} \eta_{i-1,i}- \frac{\partial \theta_{i,i+1}}{\partial p_{i+1}} \eta_{i+1,i+2} +\theta_{i, i+1} -\theta_{i+2, i-1}\right) \\
 &+ \frac{1}{2}(\theta_{12}+\theta_{23}+\theta_{34}+\theta_{41})(f_1-f_2+f_3-f_4)^2.
\end{align*}
In the last step, we apply a straightforward but non-trivial identity:
\begin{align*}
&\hspace*{-1cm}  
\sum_{i=1}^4 (f_i-f_{i+1})^2 (-\theta_{i, i+1} +\theta_{i+2, i-1}) + 2  \sum_{i=1}^4  (f_i-f_{i-1})(f_i-f_{i+1}) (\theta_{i,i-1}+\theta_{i,i+1}) \\
&=(\theta_{12}+\theta_{23}+\theta_{34}+\theta_{41})(f_1-f_2+f_3-f_4)^2.
\end{align*}
It suffices to show the coefficient of each square term is positive.
We apply Lemma \ref{Erbar-Mass-Lemma}. First apply Equation \eqref{eq:st} to $\theta(p_i, p_{i+1})$.
\begin{align*}
  &\frac{\partial \theta_{i,i+1}}{\partial p_i} \eta_{i-1,i}- \frac{\partial \theta_{i,i+1}}{\partial p_{i+1}} \eta_{i+1,i+2} +\theta_{i, i+1} -\theta_{i+2, i-1}\\ 
  =&\frac{\partial \theta_{i,i+1}}{\partial p_i} (p_{i-1} -p_{i})- \frac{\partial \theta_{i,i+1}}{\partial p_{i+1}} (p_{i+1}-p_{i+2}) + \left(p_i \frac{\partial \theta_{i,i+1}}{\partial p_i}+ p_{i+1}\frac{\partial \theta_{i,i+1}}{\partial p_{i+1}}\right)  -\theta_{i+2, i-1}\\
  =&p_{i-1} \frac{\partial \theta_{i,i+1}}{\partial p_i}+ p_{i+2}\frac{\partial \theta_{i,i+1}}{\partial p_{i+1}} -\theta_{i-1, i+2}\\
\geq& 0.
\end{align*}
The last step applies inequality \eqref{eq:uv}.
\end{proof}

\begin{corollary}\label{col3}
Suppose that $\theta$ function is 1-homogenous, convex, and satisfying $\theta(p_i,p_j)=\frac{p_i-p_j}{\frac{\partial E}{\partial p_i}- \frac{\partial E}{\partial p_j}}$. Then
$$\kappa^{Q_n}_0\geq \kappa^{K_2}_0. $$
\end{corollary}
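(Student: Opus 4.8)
The plan is to view the hypercube as the iterated Cartesian product $Q_n = K_2 \square K_2 \square \cdots \square K_2$ ($n$ factors) and to run an induction on $n$ built on Theorem \ref{graphproduct}. The hypotheses of the corollary — $\theta$ is $1$-homogeneous, convex, and $\theta(p_i,p_j) = \frac{p_i - p_j}{\partial E/\partial p_i - \partial E/\partial p_j}$ — are precisely those of Theorem \ref{tC4prop1}, so that theorem immediately supplies the one nontrivial ingredient: $\theta$ has the $C_4$-property with respect to $E$. Everything else is bookkeeping.

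First I would record the structural facts: $Q_1 = K_2$ and, for $n \geq 2$, $Q_n = Q_{n-1} \square K_2$ by associativity of the Cartesian product; and on each graph $Q_m$ the same function $\theta \in \T$ is used edgewise via $\theta_{ij} = \theta(p_i,p_j)$, with $E$ the matching separable energy (so that $\partial_i E$ depends only on $p_i$ and the displayed relation persists on the enlarged vertex set). Since $\theta$ is a fixed function of two variables, $1$-homogeneity and convexity are unchanged on every $Q_m$, hence the $C_4$-property from Theorem \ref{tC4prop1} is available for every application of Theorem \ref{graphproduct} below. The induction then runs: the base case $n = 1$ is the tautology $\kappa^{Q_1}_0 = \kappa^{K_2}_0 \geq \kappa^{K_2}_0$; for the step, assuming $\kappa^{Q_{n-1}}_0 \geq \kappa^{K_2}_0$, apply Theorem \ref{graphproduct} with $G = Q_{n-1}$ and $H = K_2$ to obtain
\[
\kappa^{Q_n}_0 = \kappa^{Q_{n-1} \square K_2}_0 \geq \min\{\kappa^{Q_{n-1}}_0, \kappa^{K_2}_0\} = \kappa^{K_2}_0,
\]
which closes the induction.

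The only thing requiring care — and it is the ``main obstacle'' only in a very mild sense — is confirming that the triple $(Q_n, \{\theta_{ij}\}, E)$ whose curvature bound appears in the statement is genuinely the one to which Theorems \ref{tC4prop1} and \ref{graphproduct} apply: one must observe that the defining relation between $\theta$ and $E$ forces $E$ to be of the separable entropy form $E(p) = \sum_i U(p_i)$ with $\theta_{ij} = (p_i - p_j)/(U'(p_i) - U'(p_j))$, so that $\theta$ and the relation make sense uniformly across all the vertex sets $V(Q_1) \subset \cdots \subset V(Q_n)$. Once this is in place, no computation remains; the corollary is simply Theorem \ref{tC4prop1} feeding Theorem \ref{graphproduct} along the chain $K_2, Q_2, \ldots, Q_n$.
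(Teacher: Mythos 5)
Your proof is correct and is exactly the argument the paper intends: the corollary is stated immediately after Theorems \ref{graphproduct} and \ref{tC4prop1} precisely so that it follows by writing $Q_n$ as an iterated Cartesian product of copies of $K_2$ and inducting with the $C_4$-property supplied by Theorem \ref{tC4prop1}. No substantive difference from the paper's (implicit) route.
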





\section{Applications of Hessian matrices on graphs}\label{section6}
In this section, we apply the mean--field information Gamma calculus to study dynamics on graphs. We first study convergence behaviors of several dissipative dynamics on graphs, including heat equations on graphs. We then formulate several related functional inequalities of general energies on graphs. We last prove the analog of Costa's power inequality on a two point graph.  
\subsection{Entropy dissipation on graphs} 
We first prove the convergence property of discrete heat type equations. Consider a convex energy function as $E(p)$. Denote its minimizer in probability simplex as 
\begin{equation*}
\pi=\arg\min_p\Big\{E(p):\sum_{i=1}^np_i=1,~p_i\geq 0\Big\}.  
\end{equation*}
Consider the following initial value dynamics: 
\begin{equation}\label{dynamics}
\frac{dp_i}{dt}=-\Big(L(\Theta)\nabla_p E(p)\Big)_i=\sum_{j=1}^n(\frac{\partial}{\partial p_j}E(p)-\frac{\partial}{\partial p_i}E(p))\theta_{ij}(p).     
\end{equation}
We notice that equation \eqref{dynamics} is a generalization of heat flows on graphs. In other words, if we select $\theta_{ij}=\frac{p_i-p_j}{\frac{\partial E}{\partial p_i}-\frac{\partial E}{\partial p_j}}$ and assume $\theta_{ij}\geq 0$ for any $p\in M$, then the equation \eqref{dynamics} forms a discrete heat equation:
\begin{equation*}
\frac{dp_i}{dt}=\sum_{ij\in E(G)}(p_j-p_i). 
\end{equation*}

We next demonstrate that function $E$ is a Lyapunov function for dynamics \eqref{dynamics}.  
\begin{lemma}[First and second order De-Bruijn equalities on graphs]\label{lem5}
Suppose $p(t)$ satisfies equation \eqref{dynamics}, then the first order time derivative of $E$ follows 
\begin{equation*}
    \frac{d}{dt}E(p(t))=-\mathrm{I}(p(t)),
\end{equation*}
where $I\colon M\rightarrow\mathbb{R}$ is a ``mean-field Fisher information functional" defined as
\begin{equation*}
\begin{split}
\mathrm{I}(p):=&\Gamma_1(p,\nabla_p E(p), \nabla_p E(p))\\
=&\sum_{i,j=1}^n(\frac{\partial}{\partial p_i} E(p)-\frac{\partial}{\partial p_j}E(p))^2\theta_{ij}(p).     
\end{split}
\end{equation*}
In addition, the second order time derivative of $E$ satisfies
\begin{equation*}
    \frac{d^2}{dt^2}E(p(t))=-\frac{d}{dt}\mathrm{I}(p(t))=2\mathrm{J}(p(t)),
\end{equation*}
where $J\colon M\rightarrow\mathbb{R}$ is a functional defined as 
\begin{equation*}
\begin{split}
\mathrm{J}(p):=&\Gamma_2(p,\nabla_pE(p), \nabla_pE(p))\\
=&\frac{1}{2} \sum_{i,j,k=1}^n (\frac{\partial}{\partial p_i}E(p)-\frac{\partial}{\partial p_j}E(p))^2 \left(  
\frac{\partial \theta_{ij}}{\partial p_i} \eta_{ki}
+ \frac{\partial \eta_{ij}}{\partial p_i} \theta_{ki} 
+ \frac{\partial \eta_{jk}}{\partial p_j} \theta_{ij}
-\frac{\partial \eta_{ki}}{\partial p_k} \theta_{jk}
    \right).
\end{split}
\end{equation*}
\end{lemma}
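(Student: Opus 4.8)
The plan is to differentiate $E(p(t))$ directly along the flow \eqref{dynamics} and recognize the resulting expressions as $\Gamma_1$ and $\Gamma_2$. First I would compute the first order derivative: by the chain rule $\frac{d}{dt}E(p(t)) = \sum_i \partial_i E \cdot \dot p_i$, and substituting $\dot p_i = \sum_j (\partial_j E - \partial_i E)\theta_{ij}$ gives $\sum_{i,j}(\partial_j E - \partial_i E)\theta_{ij}\partial_i E$. Applying the antisymmetry identity \eqref{eq:asymmetry} (with $b_{ij}=(\partial_j E-\partial_i E)\theta_{ij}$, which is antisymmetric in $i,j$, and $a_{ij}=-(\partial_i E-\partial_j E)^2\theta_{ij}$) symmetrizes this to $-\frac12\sum_{i,j}(\partial_i E-\partial_j E)^2\theta_{ij} = -\mathrm{I}(p)$, which is exactly $-\Gamma_1(p,\nabla_p E,\nabla_p E)$ by the definition of $\Gamma_1$. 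This establishes the first order De Bruijn identity.

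For the second order statement, the key observation is that the dynamics \eqref{dynamics} is, up to a choice of potential term, the geodesic flow: indeed, comparing \eqref{dynamics} with the first line of the geodesic equation \eqref{eq:geodesic} shows that $p(t)$ evolves as in \eqref{eq:geodesic} with the cotangent variable $f = -\nabla_p E(p)$. Then I would argue that with this identification the second line of \eqref{eq:geodesic} is satisfied for a suitable choice of $h$ — namely $h_i$ is determined by $\dot f_i = -\frac{d}{dt}\partial_i E(p) = -\sum_j \partial_{ij}^2 E \,\dot p_j$ together with the $-\frac12\sum_k (f_i-f_k)^2 \partial_{p_i}\theta_{ik}$ term — and crucially that this $h$ is automatically orthogonal to both $f = -\nabla_p E$ and to $\nabla E$ in the required sense. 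Granting that, the definition $\Gamma_2(p,f,f) = \frac{d^2}{dt^2}E(p(t))$ applies verbatim, so $\frac{d^2}{dt^2}E(p(t)) = \Gamma_2(p,\nabla_p E,\nabla_p E)$ (using $\Gamma_2(p,f,f)$ is even in $f$), and Theorem \ref{t2} supplies the explicit formula \eqref{eq:Gamma2_3} with $f$ replaced by $\nabla_p E$, which is precisely $2\mathrm{J}(p)$ after writing $2\mathrm{J}=\Gamma_2$. Combining with the first order identity gives $\frac{d^2}{dt^2}E(p(t)) = -\frac{d}{dt}\mathrm{I}(p(t)) = 2\mathrm{J}(p(t))$.

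The main obstacle I anticipate is making the reduction to the geodesic equation fully rigorous — specifically, verifying that the orthogonality hypotheses of Theorem \ref{t2} ($h \perp f$ and $h \perp \nabla E$) hold for the $h$ implicitly defined by the gradient flow, and confirming that the $h$-dependent terms genuinely drop out. One clean way around this is to avoid invoking the geodesic framework altogether and instead differentiate the first order identity $\frac{d}{dt}E(p(t)) = -\mathrm{I}(p(t))$ once more by brute force: expand $\frac{d}{dt}\mathrm{I}(p(t)) = \frac{d}{dt}\sum_{i,j}(\partial_i E - \partial_j E)^2 \theta_{ij}$, using $\dot\theta_{ij} = \partial_{p_i}\theta_{ij}\dot p_i + \partial_{p_j}\theta_{ij}\dot p_j$ and $\frac{d}{dt}(\partial_i E - \partial_j E) = \sum_k(\partial_{ik}^2 E - \partial_{jk}^2 E)\dot p_k$, then substitute $\dot p_i = \sum_k(\partial_k E - \partial_i E)\theta_{ik}$ and reorganize using the symmetrization identities \eqref{eq:symmetry}, \eqref{eq:asymmetry}, \eqref{eq:ijk2ij}, \eqref{eq:ij2ijk} exactly as in the proof of Theorem \ref{t2}. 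This is a longer but self-contained computation that mirrors the Theorem \ref{t2} derivation line by line with $f = \nabla_p E$, and it sidesteps any subtlety about the connection $\nabla$ and the auxiliary vector $h$. I would present the first order identity in full and then either cite Theorem \ref{t2} via the geodesic identification or indicate that the second order identity follows from the identical symmetrization argument applied with $f=\nabla_p E$.
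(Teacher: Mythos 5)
Your first-order computation is fine and is essentially what the paper does. The problem is in the second-order part, and it is not the technicality you flag as ``the main obstacle'' --- it is a genuine failure of the approach. The gradient flow \eqref{dynamics} is \emph{not} a constant-speed geodesic, and the vector $h$ you implicitly define (the defect between $\frac{d}{dt}\partial_i E = \sum_k \partial^2_{ik}E\,\dot p_k$ and the right-hand side of the second line of \eqref{eq:geodesic}) is \emph{not} orthogonal to $f=-\nabla_p E$. Indeed, by Lemma \ref{t1}, $h\perp f$ is equivalent to $\Gamma_1(f,f)=\mathrm{I}(p(t))$ being constant along the curve, which by the very statement you are trying to prove would force $\mathrm{J}(p)=0$. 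So the orthogonality hypothesis of Theorem \ref{t2} fails except in degenerate cases, and the $h$-dependent term $\sum_{i,j}\eta_{ij}(h_i-h_j)$ does not drop out --- it is precisely what carries the missing contribution. The cleanest way to see that the identification cannot work as stated is the factor of $2$: along a geodesic with cotangent vector $f=\nabla_p E$ at a point, the definition gives $\frac{d^2}{dt^2}E=\Gamma_2(p,\nabla_pE,\nabla_pE)=\mathrm{J}(p)$, whereas the lemma asserts that along the \emph{gradient flow} one gets $2\mathrm{J}(p)$. (On $K_2$ one can check directly that $\frac{d^2}{dt^2}E(p(t))=2\kappa\,\theta_{12}(\partial_1E-\partial_2E)^2=2\mathrm{J}$, twice the geodesic value.) This is the discrete counterpart of the standard Riemannian fact that $\frac{d^2}{dt^2}E=2\,\mathrm{Hess}\,E(\mathrm{grad}\,E,\mathrm{grad}\,E)$ along a gradient flow but $\mathrm{Hess}\,E(\dot\gamma,\dot\gamma)$ along a geodesic; your identification erases exactly that distinction.

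Your fallback (differentiate $\mathrm{I}(p(t))$ by brute force) is the right repair, and it is close in spirit to what the paper actually does: the paper reduces the lemma to the general finite-dimensional identities of appendix subsection \ref{71}, where the extra factor of $2$ is produced by the $\frac{d}{dt}g(x)^{-1}$ (Christoffel) term in $\frac{d}{dt}\bigl(\nabla_xE^{\ts}g(x)^{-1}\nabla_xE\bigr)$. But your description of the fallback as ``mirroring the Theorem \ref{t2} derivation line by line with $f=\nabla_p E$'' is misleading for the same reason as above: in Theorem \ref{t2} the term $\sum_{i,j}\dot f_i\eta_{ij}$ is evaluated using the geodesic equation for $\dot f_i$, whereas here $\frac{d}{dt}\partial_iE$ must be evaluated by the chain rule, and the two computations are structurally different --- that difference is where the factor $2$ comes from. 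If you carry out the fallback, do it honestly from $\frac{d}{dt}\sum_{i,j}\theta_{ij}(\partial_iE-\partial_jE)^2$ with $\dot\theta_{ij}$ and $\frac{d}{dt}(\partial_iE-\partial_jE)$ expanded via \eqref{dynamics}, and verify that the result is $-2\mathrm{J}$ rather than $-\mathrm{J}$; as written, your argument would land on the wrong constant. (A smaller point worth checking against the paper's conventions: with $\mathrm{I}$ defined as the full double sum over ordered pairs, your symmetrization in the first-order step produces $-\tfrac12\sum_{i,j}(\partial_iE-\partial_jE)^2\theta_{ij}$, so be explicit about whether sums run over ordered pairs or over edges.)
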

\begin{proof}
The proof follows from the definitions of gradient and Hessian operators defined in $(M, g)$. See proofs in appendix subsection \ref{71}. 
\end{proof}
We are ready to state the convergence behavior of dynamics \eqref{dynamics}, using energy function $E$ as the Lyapunov function. The following results demonstrate that $p(t)$ converges to $\pi$ exponentially fast. And the rate can be characterized by the proposed mean-field Ricci curvature lower bound. 
\begin{corollary}[Entropy dissipation on graphs]
Suppose $\kappa>0$ and $p(t)$ satisfies equation \eqref{dynamics}, then 
\begin{equation*}
E(p(t))-E(\pi)\leq e^{-2\kappa t}(E(p_0)-E(\pi)).     
\end{equation*}
\end{corollary}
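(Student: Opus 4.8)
The plan is to show that the functional $t \mapsto E(p(t)) - E(\pi)$ decays at least as fast as $e^{-2\kappa t}$ by combining the first- and second-order De-Bruijn equalities of Lemma \ref{lem5} with the curvature lower bound $\Gamma_2 \geq \kappa \Gamma_1$. First I would observe that along the flow \eqref{dynamics}, Lemma \ref{lem5} gives $\frac{d}{dt}E(p(t)) = -\mathrm{I}(p(t))$ and $\frac{d}{dt}\mathrm{I}(p(t)) = -2\mathrm{J}(p(t))$, where $\mathrm{I}(p) = \Gamma_1(p,\nabla_p E,\nabla_p E)$ and $\mathrm{J}(p) = \Gamma_2(p,\nabla_p E,\nabla_p E)$. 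The key point is that the vector field driving \eqref{dynamics} is exactly $f = \nabla_p E(p)$ (up to the gradient-flow sign convention in $(M,g)$), so the curvature inequality $\Gamma_2(p,f,f) \geq \kappa\,\Gamma_1(p,f,f)$ applies with this choice of $f$, yielding $\mathrm{J}(p) \geq \kappa\,\mathrm{I}(p)$ pointwise along the trajectory.

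The second step is a Grönwall-type argument on $\mathrm{I}$. From $\frac{d}{dt}\mathrm{I}(p(t)) = -2\mathrm{J}(p(t)) \leq -2\kappa\,\mathrm{I}(p(t))$ we get $\mathrm{I}(p(t)) \leq e^{-2\kappa t}\mathrm{I}(p_0)$. Then, since $E$ is convex with minimizer $\pi$ and the flow converges to $\pi$, I would integrate the first-order identity from $t$ to $\infty$:
\begin{equation*}
E(p(t)) - E(\pi) = \int_t^\infty \mathrm{I}(p(s))\,ds \leq \int_t^\infty e^{-2\kappa s}\,\mathrm{I}(p_0)\,ds = \frac{e^{-2\kappa t}}{2\kappa}\mathrm{I}(p_0).
\end{equation*}
Applying the same identity at $t=0$ gives $E(p_0) - E(\pi) = \int_0^\infty \mathrm{I}(p(s))\,ds$; alternatively, one can bound $\mathrm{I}(p_0) \leq 2\kappa(E(p_0)-E(\pi))$ directly by noting that $\frac{d}{dt}(E(p(t))-E(\pi)) = -\mathrm{I}(p(t)) \leq -2\kappa(E(p(t))-E(\pi))$ — this last inequality is itself a direct consequence of differentiating and using $\mathrm{J}\geq\kappa\mathrm{I}$ together with the fact that $E-E(\pi)\to 0$. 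Feeding this into Grönwall's inequality for the scalar function $\phi(t) = E(p(t))-E(\pi)$, namely $\phi'(t) \leq -2\kappa\,\phi(t)$, immediately gives $\phi(t) \leq e^{-2\kappa t}\phi(0)$, which is the claimed bound.

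I expect the main obstacle to be the justification that $\frac{d}{dt}(E(p(t))-E(\pi)) \leq -2\kappa(E(p(t))-E(\pi))$ holds for all $t\geq 0$; this requires knowing that $\mathrm{I}(p(t)) \geq 2\kappa(E(p(t))-E(\pi))$, i.e., a mean-field ``log-Sobolev''-type inequality, which one derives by integrating the differential inequality $\frac{d}{ds}\mathrm{I}(p(s)) \leq -2\kappa\,\mathrm{I}(p(s))$ forward and the identity $E(p(t))-E(\pi) = \int_t^\infty \mathrm{I}(p(s))\,ds$ — so I must first ensure $p(t)\to\pi$ and $E(p(t))\to E(\pi)$, which follows because $\kappa>0$ forces $E$ to be strictly geodesically convex so the gradient flow converges to the unique minimizer. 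The remaining steps — the chain rule computations underlying Lemma \ref{lem5} and the scalar Grönwall estimate — are routine.
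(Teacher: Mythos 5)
Your proposal is correct and follows essentially the same route as the paper: combine the De-Bruijn identities of Lemma \ref{lem5} with the pointwise bound $\mathrm{J}(p)\geq\kappa\,\mathrm{I}(p)$ coming from $\Gamma_2\geq\kappa\Gamma_1$ evaluated at $f=\nabla_pE(p)$, integrate the resulting second-order differential inequality over $[t,\infty)$ to obtain $\frac{d}{dt}\bigl(E(p(t))-E(\pi)\bigr)\leq -2\kappa\bigl(E(p(t))-E(\pi)\bigr)$, and conclude by Gr\"onwall. Your ``alternative'' route is exactly the paper's argument, and you are in fact more explicit than the paper in flagging that the integration to infinity requires $E(p(t))\to E(\pi)$.
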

\begin{proof}
The proof follows from the Gronwall's inequality. See also proofs in appendix's section \ref{71}. We notice that the mean field Ricci curvature is defined as
\begin{equation*}
\Gamma_2(p, f,f)\geq \kappa\Gamma_1(p, f,f),
\end{equation*}
for any $f\in \mathbb{R}^n$. 
This implies the fact that 
\begin{equation*}
\mathrm{J}(p)=\Gamma_2(p, f,f)|_{f=\nabla_pE(p)}\geq \kappa\Gamma_1(p, f,f)|_{f=\nabla_pE(p)}=\kappa \mathrm{I}(p).  
\end{equation*}
From Lemma \ref{lem5}, the above inequality implies that 
\begin{equation*}
\frac{d^2}{dt^2}E(p(t))\geq -2\kappa\frac{d}{dt}E(p(t)).  
\end{equation*}
Integrating in a time domain $[t, \infty)$, we have
\begin{equation*}
\frac{d}{dt}(E(p(t))-E(\pi))\leq -2\kappa (E(p(t))-E(\pi)).    
\end{equation*}
Following the Grownwall's inequaity, we prove the result. 
\end{proof}
\begin{remark}
The above result could also provide a convergence rate for the discrete heat equation, in term of general Lyapunov function $E(p)$. E.g, if $E(p)=\sum_{i=1}^n p_i\log p_i$ and $\theta_{ij}=\frac{p_i-p_j}{\log p_i-\log p_j}$, it recovers the ones derived in \cite{Maas2012,M} and \cite{BT}. 
\end{remark}
\begin{remark}
We remark that the optimal rate of convergence is given from
\begin{equation*}
\min_{p\in M}\frac{\mathrm{J}(p)}{\mathrm{I}(p)}\geq \kappa.    
\end{equation*}
There could exist energy functions $E$, such that $\kappa$ is not the optimal rate. A direct comparison between $\mathrm{I}$ and $\mathrm{J}$ is called entropy dissipation method; see \cite{WZ} and many references therein. 
\end{remark}

\subsection{Functional inequalities on graphs}
We next present several functional inequalities, which can be derived by the entropy dissipation result on graphs. 
\begin{corollary}[Functional inequalities on graphs]
Suppose $\kappa>0$, then the mean-field Log-Sobolev inequality on a graph holds: 
\begin{equation}\label{lsi}
E(p)-E(\pi)\leq \frac{1}{2\kappa}\sum_{i,j=1}^n(\frac{\partial}{\partial p_i} E(p)-\frac{\partial}{\partial p_j}E(p))^2\theta_{ij}(p),
\end{equation}
for any $p\in M$. 
 \end{corollary}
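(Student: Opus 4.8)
The plan is to derive the mean-field Log-Sobolev inequality \eqref{lsi} as a direct consequence of the entropy dissipation corollary immediately preceding it. First I would invoke the dynamics \eqref{dynamics} with an \emph{arbitrary} initial condition $p_0 = p \in M$, so that the solution $p(t)$ is the gradient flow of $E$ in the metric $(M,g)$ starting at $p$. By the previous corollary, since $\kappa > 0$ we have the exponential decay $E(p(t)) - E(\pi) \leq e^{-2\kappa t}(E(p) - E(\pi))$ for all $t \geq 0$. Since the left-hand side is nonnegative and tends to $0$ as $t \to \infty$, I can differentiate this inequality in $t$ at $t = 0$: the function $t \mapsto E(p(t)) - E(\pi)$ is bounded above by the exponential with matching value at $t=0$, so its derivative at $t=0$ is at most the derivative of the exponential at $t=0$, namely $-2\kappa(E(p) - E(\pi))$.

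Next I would combine this with the first-order De-Bruijn identity from Lemma \ref{lem5}, which states $\frac{d}{dt}E(p(t)) = -\mathrm{I}(p(t))$ where $\mathrm{I}(p) = \sum_{i,j=1}^n (\partial_{p_i}E(p) - \partial_{p_j}E(p))^2 \theta_{ij}(p)$. Evaluating at $t = 0$ gives $\frac{d}{dt}\big|_{t=0}E(p(t)) = -\mathrm{I}(p)$. Chaining the two bounds yields $-\mathrm{I}(p) \leq -2\kappa(E(p) - E(\pi))$, i.e.
\begin{equation*}
E(p) - E(\pi) \leq \frac{1}{2\kappa}\,\mathrm{I}(p) = \frac{1}{2\kappa}\sum_{i,j=1}^n\Big(\frac{\partial}{\partial p_i}E(p) - \frac{\partial}{\partial p_j}E(p)\Big)^2\theta_{ij}(p),
\end{equation*}
which is precisely \eqref{lsi}. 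Since $p \in M$ was arbitrary, this holds for all $p \in M$.

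Alternatively, and perhaps more cleanly, I would argue purely by integration without differentiating: starting from $p_0 = p$, the entropy dissipation corollary gives $E(p(t)) - E(\pi) \leq e^{-2\kappa t}(E(p)-E(\pi))$, while integrating the first De-Bruijn identity gives $E(p) - E(\pi) = E(p) - \lim_{t\to\infty}E(p(t)) = \int_0^\infty \mathrm{I}(p(t))\,dt$ (using that $E(p(t)) \to E(\pi)$, which follows from the exponential bound). One then needs to bound $\int_0^\infty \mathrm{I}(p(t))\,dt$; the decay estimate on $E$ together with $\mathrm{I} \geq 2\kappa(E(\cdot) - E(\pi))$ along the flow (which is itself the differential form of the entropy dissipation) closes the loop, but this route is slightly circular, so the derivative-at-$t=0$ argument is the one I would present.

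The main obstacle — really the only subtlety — is justifying that the gradient flow \eqref{dynamics} starting from an arbitrary $p \in M$ exists, stays in $M$ (so that $\theta_{ij}(p(t))$ and $\pi$ behave well), and converges to $\pi$, so that both Lemma \ref{lem5} and the entropy dissipation corollary apply with initial data $p$. This is exactly the hypothesis framework already set up in the preceding results (convexity of $E$, $\kappa > 0$), so I would simply note that the entropy dissipation corollary and Lemma \ref{lem5} hold for any initial point, and the differentiation at $t = 0$ is legitimate because $t \mapsto E(p(t))$ is smooth along the flow. No genuinely new estimate is needed; the inequality \eqref{lsi} is the infinitesimal (at $t=0$) shadow of the global exponential decay.
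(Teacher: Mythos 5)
Your argument is correct, but it runs the paper's logic in the opposite direction. The paper (via the appendix, subsection \ref{71}) first establishes the second-order differential inequality $\frac{d^2}{dt^2}E(p(t))\geq -2\kappa\,\frac{d}{dt}E(p(t))$ from $\mathrm{J}\geq\kappa\,\mathrm{I}$ (the second-order De-Bruijn identity plus the curvature bound), integrates it over $[t,\infty)$ using $\frac{d}{dt}E(p(t))\to 0$ and $E(p(t))\to E(\pi)$, and obtains the log-Sobolev inequality \eqref{lsi} directly; the exponential decay is then a \emph{consequence} of \eqref{lsi} via Gronwall. You instead take the exponential decay corollary as given, note that $\phi(t)=E(p(t))-E(\pi)$ is dominated by $e^{-2\kappa t}\phi(0)$ with equality at $t=0$, differentiate at $t=0^+$ to get $\phi'(0)\leq -2\kappa\,\phi(0)$, and combine with the first-order De-Bruijn identity $\phi'(0)=-\mathrm{I}(p)$. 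This is a valid derivation (the comparison of right derivatives at a point of tangency is legitimate since $E(p(t))$ is smooth along the flow), and it has the merit of needing only the first-order identity from Lemma \ref{lem5} at the final step. Its cost is that it is a detour: the proof of the entropy dissipation corollary already passes through the differential inequality $\frac{d}{dt}\phi\leq -2\kappa\phi$, which evaluated at $t=0$ \emph{is} \eqref{lsi}, so you are recovering an intermediate step of an earlier proof from its Gronwall-integrated conclusion. Both routes rest on the same unproved-in-the-paper regularity facts (global existence of the flow in the interior of $M$ and convergence to $\pi$), which you correctly flag; you are right that no new estimate is required beyond $\Gamma_2\geq\kappa\,\Gamma_1$.
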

\begin{proof}
The proof of above inequalities follow the definitions of gradient and Hessian operators in $(M, g)$. See proofs in appendix's section \ref{71}. See also \cite{OV} for other related inequalities.
\end{proof}
Here we present several examples of inequalities on graphs \eqref{lsi}. 
\begin{example}
The following functional inequalities hold. 
\begin{itemize}
\item[(i)] Linear energy: Let $E(p)=\sum_{i=1}^nV_ip_i$. Then \eqref{lsi} satisfies
\begin{equation*}
\sum_{i=1}^n V_ip_i-\sum_{i=1}^nV_i\pi_i\leq \frac{1}{2\kappa}\sum_{i,j=1}^n(V_i-V_j)^2\theta_{ij} (p).   
\end{equation*}
\item[(ii)] Interaction energy: Let $E(p)=\frac{1}{2}\sum_{i=1,j}^nW_{ij}p_ip_j$. Then \eqref{lsi} forms 
\begin{equation*}
\frac{1}{2}\sum_{i=1,j}^nW_{ij}p_ip_j-\frac{1}{2}\sum_{i=1,j}^nW_{ij}\pi_i\pi_j\leq \frac{1}{2\kappa}\sum_{i,j=1}^n([Wp]_i-[Wp]_j)^2\theta_{ij} (p). 
\end{equation*}

\item[(iii)] Entropy: Let $E(p)=\sum_{i=1}^nU(p_i)$. Then \eqref{lsi} forms \begin{equation*}
\sum_{i=1}^nU(p_i)-\sum_{i=1}^nU(\pi_i)\leq \frac{1}{2\kappa}\sum_{i,j=1}^n(U'(p_i)-U'(p_j))^2\theta_{ij}(p). 
\end{equation*}
In particular, if we further choose $\theta$ as the transport information mean, then \eqref{lsi} satisfies
\begin{equation*}
\sum_{i=1}^nU(p_i)-\sum_{i=1}^nU(\pi_i)\leq \frac{1}{2\kappa}\sum_{i,j=1}^n(U(p_i)+U(p_j)-U(\frac{p_i+p_j}{2}))^2. 
\end{equation*}
\end{itemize}\end{example}
\begin{remark}
The proposed Hessian matrix is also useful in proving energy splitting functional inequalities on graphs proposed in \cite{LL}; see its continuous formulation in \cite{BCGL}. We leave their detailed studies in future works. 
\end{remark}
\subsection{Costa’s entropy power inequality on graphs}
We last apply the proposed Hessian matrices to prove discrete Costa's entropy power inequalities. They are discrete analog of the ones in continuous sample space, which is important in information theory \cite{Costa,GSV, LLPS,vil2000}. 

Denote an energy function as 
\begin{equation*}
    N(p)=e^{-\frac{2}{m}E(p)},
\end{equation*}
where $E(p)=\sum_{i=1}^nU(p_i)$ and $U$ is a given convex function. Suppose that there exists a positive constant $m\in\mathbb{R}_+$, which is defined as
\begin{equation*}
\frac{1}{m}:=\min_{p\in M}\frac{\Gamma_2(p,\nabla_pE(p), \nabla_p E(p))}{\Gamma_1(p,\nabla_p E(p), \nabla_pE(p))^2},
\end{equation*}
where we select a weight function as
\begin{equation*}
    \theta_{ij}=\frac{p_i-p_j}{U'(p_i)-U'(p_j)}. 
\end{equation*}
We are ready to prove the main result. 
\begin{theorem}[Discrete Costa’s entropy power inequality]
Consider a discrete heat equation:
\begin{equation}\label{dh}
\frac{d p_i}{dt}=\frac{1}{2}\sum_{ij\in E(G)} (p_j-p_i).    
\end{equation}
Denote that $p(t)$ satisfies the above equation. Then the following inequality holds. 
\begin{equation*}
\frac{d^2}{dt^2}N(p(t))\leq 0,    
\end{equation*}
for any $t\geq 0$. 
\end{theorem}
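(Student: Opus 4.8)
The plan is to reduce the statement to a scalar differential inequality for $E(p(t))$ and then recognize that inequality as exactly the defining property of the constant $m$. Write $E(t):=E(p(t))$, so that $N(p(t))=e^{-\frac{2}{m}E(t)}$. Differentiating twice and using the chain rule,
\begin{equation*}
\frac{d^2}{dt^2}N(p(t)) = -\frac{2}{m}\,e^{-\frac{2}{m}E(t)}\Big(\ddot E(t)-\tfrac{2}{m}\,\dot E(t)^2\Big).
\end{equation*}
Since $e^{-\frac{2}{m}E(t)}>0$ and $m>0$, it suffices to prove the reduced inequality $\ddot E(t)\ge \tfrac{2}{m}\dot E(t)^2$ for all $t\ge 0$.

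Next I would identify the dynamics. With the prescribed weight $\theta_{ij}=\frac{p_i-p_j}{U'(p_i)-U'(p_j)}$ and $E=\sum_i U(p_i)$, equation \eqref{dynamics} reads $\dot p_i=\sum_{ij\in E(G)}(p_j-p_i)$, so the heat flow \eqref{dh} is precisely \eqref{dynamics} run at half speed; moreover the heat flow keeps $p(t)$ inside (the interior of) the simplex $M$ for $t>0$, and for all $t\ge 0$ when $p_0$ is interior. Applying the second-order De Bruijn identity of Lemma \ref{lem5}, and accounting for the overall factor $\tfrac12$ in \eqref{dh}, gives
\begin{equation*}
\dot E(t) = -\tfrac12\,\mathrm I(p(t)), \qquad \ddot E(t) = \tfrac12\,\mathrm J(p(t)),
\end{equation*}
where $\mathrm I(p)=\Gamma_1(p,\nabla_p E,\nabla_p E)$ and $\mathrm J(p)=\Gamma_2(p,\nabla_p E,\nabla_p E)$, the latter being the entropy Hessian computed in Theorem \ref{t2} and Corollary \ref{Col1}(iii) for $E=\sum_i U(p_i)$.

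Substituting into the reduced inequality yields $\ddot E-\tfrac{2}{m}\dot E^2=\tfrac12\mathrm J-\tfrac{1}{2m}\mathrm I^2=\tfrac12\big(\mathrm J-\tfrac1m\mathrm I^2\big)$, so the claim is equivalent to $\mathrm J(p(t))\ge \tfrac1m\,\mathrm I(p(t))^2$. If $\mathrm I(p(t))=0$, then, since $U$ is convex (hence $\theta_{ij}\ge 0$) and $\mathrm I(p)=\sum_{i,j}(U'(p_i)-U'(p_j))^2\theta_{ij}$, the point $p(t)$ is a stationary point of \eqref{dh}, so $N(p(\cdot))$ is locally constant and $\ddot N=0$ there. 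Otherwise $\mathrm I(p(t))>0$, and dividing turns the desired bound into $\frac{\mathrm J(p(t))}{\mathrm I(p(t))^2}\ge \frac1m$, which holds because $p(t)\in M$ and, by hypothesis, $\frac1m=\min_{p\in M}\frac{\Gamma_2(p,\nabla_p E,\nabla_p E)}{\Gamma_1(p,\nabla_p E,\nabla_p E)^2}$. Hence $\frac{d^2}{dt^2}N(p(t))\le 0$.

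The computation itself is short; the real content sits in its inputs, namely Lemma \ref{lem5} (equivalently, that the dissipation of the mean-field Fisher information $\mathrm I$ along the heat flow is exactly $-2\mathrm J$) together with the $\Gamma_2$ formula of Theorem \ref{t2}. The only delicate point I would flag is the behaviour at $t=0$ when $p_0\in\partial M$: for $U(p)=p\log p$ one can have $\dot E(0^+)=-\infty$, so one should either restrict to interior initial data or interpret the conclusion for $t>0$ and extend to $t=0$ by continuity of $N$.
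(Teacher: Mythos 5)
Your proof is correct and takes essentially the same route as the paper's: both apply the chain rule together with the (half-speed) De Bruijn identities $\dot E=-\tfrac12 \mathrm{I}$, $\ddot E=\tfrac12 \mathrm{J}$ (equivalently, $\tfrac{d}{dt}\Gamma_1=-\Gamma_2$ along \eqref{dh}) to reduce $\tfrac{d^2}{dt^2}N\le 0$ to the inequality $\mathrm{J}\ge \tfrac{1}{m}\mathrm{I}^2$, which is exactly the definition of $m$. Your explicit handling of the $\mathrm{I}=0$ case and the boundary caveat at $t=0$ are minor refinements that the paper's proof leaves implicit.
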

\begin{proof}
The proof follows from a direct computation. Denote 
\begin{equation*}
\frac{dp_i}{dt}=\frac{1}{2}\sum_{j=1}^n(p_j-p_i)=-\frac{1}{2}L(\Theta)\nabla_pE(p),
\end{equation*}
where $\theta_{ij}=\frac{p_i-p_j}{U'(p_i)-U'(p_j)}$. We next compute the derivatives of $N$ along the equation \eqref{dh}. 
Firstly, 
\begin{equation*}
\begin{split}
\frac{d}{dt}N(p(t))=&e^{-\frac{2}{m}E(p)}\cdot(-\frac{2}{m})\cdot \nabla_pE(p)^{\ts}\frac{dp}{dt}\\
=&\frac{1}{m}e^{-\frac{2}{m}E(p)}\cdot \nabla_pE(p)^{\ts}L(\Theta)\nabla_pE(p)\\
=&\frac{1}{m}e^{-\frac{2}{m}E(p)}\cdot \Gamma_1(p,\nabla_pE(p), \nabla_pE(p)).
\end{split}
\end{equation*}
Secondly, 
\begin{equation*}
\begin{split}
&\frac{d^2}{dt^2}N(p(t))=\frac{d}{dt}(\frac{d}{dt}N(p(t)))\\
=&\frac{1}{m}\Big(\frac{d}{dt}e^{-\frac{2}{m}U(p)}\cdot \Gamma_1(p,\nabla_pE(p), \nabla_pE(p))+e^{-\frac{2}{m}U(p)}\cdot \frac{d}{dt}\Gamma_1(p,\nabla_pE(p), \nabla_pE(p))\Big)\\
=&\frac{1}{m}e^{-\frac{2}{m}U(p)}\Big(\frac{1}{m}\Gamma_1(p,\nabla_pE(p), \nabla_pE(p))^2-\Gamma_2(p,\nabla_pE(p), \nabla_pE(p))\Big)\\
=&\frac{1}{m}e^{-\frac{2}{m}U(p)}\Gamma_1(p,\nabla_pE(p), \nabla_pE(p))^2\Big(\frac{1}{m}-\frac{\Gamma_2(p,\nabla_pE(p), \nabla_pE(p))}{\Gamma_1(p,\nabla_pE(p), \nabla_pE(p))^2}\Big)\\
\leq & 0. 
\end{split}
\end{equation*}
In above derivations, we use the fact in the second equality:
\begin{equation*}
   \frac{d}{dt}\Gamma_1(p,\nabla_pE(p),\nabla_pE(p))=-\Gamma_2(p,\nabla_pE(p),\nabla_pE(p)), 
\end{equation*}
where $p$ solves the discrete heat equation \eqref{dh}. By the definition of constant $m$, we prove the result. 
\end{proof}
We next present the Costa's entropy power's inequality on a two point graph. 
\begin{example}[Two point space]
Consider negative Boltzmann--Shannon entropy in a two point space as
\begin{equation*}
    E(p)=-H(p)=p_1\log p_1+p_2\log p_2.
\end{equation*}
In this case, 
\begin{equation*}
    \begin{split}
    \frac{1}{m}
    =&\min_{p\in M}~\frac{\kappa(p_1, p_2)}{(\log p_1-\log p_2)^2\theta_{12}},
\end{split}
\end{equation*}
where $\theta_{12}=\frac{p_1-p_2}{\log p_1-\log p_2}$ and $\kappa(p_1, p_2)$ is defined in \eqref{lmlocal}. Here $M$ is a line segment. In other words, denote $p_1=x$, $p_2=1-x$, where $x\in [0,1]$. Hence 
\begin{equation*}
\frac{1}{m}= \min_{x\in [0,1]}~\frac{1}{(\log \frac{x}{1-x})(2x-1)}+ \frac{1}{2(\log \frac{x}{1-x})^2x(1-x)}.
\end{equation*}
Numerically, we find that $\frac{1}{m}\approx 1.58353$ at $x\approx 0.058$. 
\end{example}

We remark that our proof is connected but different from the ones in classical Costa's entropy power inequality  \cite{Costa, vil2000}. The major difference comes from the formulation of Gamma calculus in discrete and continuous domain. And the concept of dimension is not clear on a graph, especially for a general function $E$ and a weight function $\theta$. The detailed derivations of Costa's entropy power inequality on general graphs are left in future works.

\section*{Appendix: Hessian matrices of energies along optimal transport dynamics}
In this section, we provide the motivation of this paper. We first present the relation between the metric and the Hessian operator on a finite dimensional metric space. We next review the connection between Gamma operators and second order calculus in optimal transport metric space. In this paper, we formulate these calculations on a discrete spatial domain, such as a finite graph. 
\subsection{Hessian operators in metric spaces}\label{71}
Consider a metric space $(\Omega, g)$. Here $\Omega=\mathbb{R}^n$ and $g\in\mathbb{R}^{n\times n}$ is a smooth positive definite matrix function. We call $g$ the metric function for space $\Omega$. 
Denote a smooth function $E\colon \Omega\rightarrow\mathbb{R}$. The gradient operator of $E$ in $(\Omega, g)$ is defined as
\begin{equation*}
\mathrm{grad}_gE(x)=  g(x)^{-1}\nabla_xE(x)=\Big(\sum_{j=1}^n (g(x)^{-1})_{ij}\nabla_{x_j}E(x)\Big)_{i=1}^n,  
\end{equation*}
where $\nabla_x$ represents the Euclidean gradient operator w.r.t variable $x$. The Hessian operator of $E$ in $(\Omega, g)$ can be written in a tangent space: 
\begin{equation*}
\mathrm{Hess}_gE(x)=\nabla_{x_ix_j}^2E(x)-\sum_{k=1}^d\nabla_{x_k}E(x)\Gamma^k_{ij}(x),
\end{equation*}
where $\Gamma^k_{ij}\colon \Omega\rightarrow\mathbb{R}$ is the Christoffel symbol:
\begin{equation*}
\Gamma^k_{ij}(x)=\frac{1}{2}\sum_{k'=1}^n(g(x)^{-1})_{kk'}\Big(\nabla_{x_i}g_{jk'}(x)+\nabla_{x_j}g_{ik'}(x)-\nabla_{x_{k'}}g_{ij}(x)\Big).
\end{equation*}
The Hessian operator can be formulated on the cotangent space. In other words, denote 
\begin{equation*}
\textrm{Hess}_g^*E(x)=g(x)^{-1}\cdot\textrm{Hess}_gE(x)\cdot g(x)^{-1}.
\end{equation*}
One can also derive the Hessian operator using the geodesic equations. Denote the geodesic equation by 
\begin{equation*}
\frac{d^2}{dt^2} x_k+\sum_{i=1}^n\sum_{j=1}^n\Gamma_{ij}^k\frac{dx_i}{dt}\frac{dx_j}{dt}=0.
\end{equation*}
Denote a vector $f(t)\in\mathbb{R}^n$ such that $\frac{dx}{dt}=g(x)^{-1}f$. Then the geodesic equation in term of $(x(t),f(t))$ satisfies 
\begin{equation*}
\left\{\begin{aligned}
\frac{dx}{dt}=& g(x)^{-1}f,\\
\frac{df}{dt}=&-\frac{1}{2}\nabla_x(f^{\ts}g(x)^{-1}f).
\end{aligned}\right.
\end{equation*}
In this case, the Hessian operator of $E$ in $(\Omega, g)$ satisfies
\begin{equation*}
\frac{d^2}{dt^2}E(x(t))=f(t)^\ts\mathrm{Hess}_g^*E(x(t))f(t),    
\end{equation*}
where $(x(t), f(t))$ satisfies the geodesic equation. 

We next apply the Hessian operator to study the convergence behavior of gradient flows. Consider 
\begin{equation*}
\frac{d}{dt}x(t)=- g(x(t))^{-1}\nabla_xE(x(t)).
\end{equation*}
It is a gradient flow equation of a convex function $E$ in $(\Omega, g)$. We use the Lyaponuv function to study above gradient flows. Along the gradient flow, we have the following estimates. Firstly, 
\begin{equation*}
\frac{d}{dt}E(x(t))=-(\nabla_x E(x), g(x)^{-1}\nabla_xE(x)).
\end{equation*}
Secondly, 
\begin{equation*}
    \frac{d^2}{dt^2}E(x(t))=2(\nabla_xE(x), \mathrm{Hess}_g^*E(x)\nabla_xE(x)). 
\end{equation*}
If there exists a constant $\kappa>0$, such that         $\mathrm{Hess}_gE(x)\succeq \kappa g(x)$, i.e.,
\begin{equation*}
    \mathrm{Hess}^*_gE(x)\succeq \kappa g(x)^{-1},
\end{equation*}
then we have 
\begin{equation*}
\frac{d^2}{dt^2}E(x(t))\geq -2\kappa\frac{d}{dt}E(x(t)).
\end{equation*}
Integrating in a time domain $[t, \infty)$, we have the functional inequality 
\begin{equation*}
E(x)-E(x_*)\leq \frac{1}{2\kappa} (\nabla_xE(x), g(x)^{-1}\nabla_xE(x)),    
\end{equation*}
where $x_*=\arg\min_{x\in\Omega} E(x)$. This also means 
\begin{equation*}
\frac{d}{dt}(E(x(t))-E(x_*))\leq -2\kappa (E(x(t))-E(x_*)).    
\end{equation*}
Following the Grownwall's inequaity, we have 
\begin{equation*}
E(x(t))-E(x_*)\leq e^{-2\kappa t}(E(x_0)-E(x_*)),    
\end{equation*}
where $x_0$ is the initial condition for the gradient flow. 

The above formulations are major motivations of this paper. We shall focus on a graph dependent metric function $g$, and derive the Hessian matrix for general function $E$. This is the proposed mean-field information Gamma calculus. And the constant $\kappa$ is the proposed mean-field Ricci curvature lower bound. 
\subsection{Hessian operators in optimal transport spaces}
We next present the Wasserstein-$2$ metric (optimal transport metric) and demonstrate its Hessian operator of a functional; see details in \cite{vil2008}. 

Let $(\Omega, g)=(\mathbb{T}^d, \mathbb{I})$ be a $d$ dimensional torus, where $\mathbb{I}\in \mathbb{R}^{d\times d}$ is an identity matrix and $(\cdot, \cdot)$ denotes the Euclidean inner product. 
Let $\int$ be the integration over domain $\Omega$. Denote a smooth positive probability density space as
\begin{equation*}
\mathcal{P}=\Big\{\rho\in C^{\infty}(\Omega)\colon \int \rho dx=1,\quad \rho> 0\Big\}.
\end{equation*}
The tangent space at $\rho \in \mathcal{P}$ is given as
\begin{equation*}
T_\rho \mathcal{P}=\Big\{\sigma\in C^\infty(\Omega)\colon \int \sigma dx=0\Big\}.
\end{equation*}
Define a weighted Laplacian operator as
\begin{equation*}
\Delta_a=\nabla\cdot(a\nabla), 
\end{equation*}
where $a\in C^{\infty}(M)$ is a given smooth function. In other words, 
\begin{equation*}
\int (f_1, \Delta_a f_2) dx= -\int (\nabla f_1, \nabla f_2)a dx,
\end{equation*}
for any functions $f_1$, $f_2\in C^\infty(\Omega)$. The Wasserstein-2 metric is defined below. 
\begin{definition}[Optimal transport metric]\label{metric}
  The inner product $g\colon \mathcal{P}\times {T_\rho}\mathcal{P}\times{T_\rho}\mathcal{P}\rightarrow\mathbb{R}$ is defined by  
  \begin{equation*} 
 g(\rho)(\sigma_1, \sigma_2)=\int \big(\sigma_1, (-\Delta_\rho)^{-1}\sigma_2\big)=\int (\nabla f_1, \nabla f_2)\rho dx,
  \end{equation*}
  for any $\sigma_1,\sigma_2\in T_\rho\mathcal{P}$. Here $\Delta_\rho=-\nabla\cdot(\rho\nabla)$ is an elliptic operator weighted linearly in density function $\rho$, and $f_i=-\Delta_\rho\sigma_i$, $i=1,2$.
  \end{definition}

We next present the Hessian operator in Wasserstein-2 metric space. 
Given a smooth functional $\mathcal{E}\colon \mathcal{P}\rightarrow \mathbb{R}$, the Hessian operator of $\mathcal{E}$ in $(\mathcal{P}, g)$ satisfies
\begin{equation*}
\begin{split}
\textrm{Hess}_g\mathcal{E}(\rho)(\sigma_1, \sigma_2)=&\int \int \nabla_x\nabla_y\delta^2\mathcal{E}(\rho)(x,y)(\nabla f_1(x),\nabla f_2(y))\rho(x)\rho(y)dxdy\\
&+\int\nabla_{xx}^2\delta\mathcal{E}(\rho)(\nabla f_1(x), \nabla f_2(x))\rho(x)dx,
\end{split}
\end{equation*}
where $\sigma_i=-\nabla\cdot(\rho\nabla f_i)$, $i=1,2$, and $\delta$, $\delta^2$ are the first and the second $L^2$ variation operators, respectively. 

\begin{example}[Linear energy]\label{linear}
Consider 
\begin{equation*}
\mathcal{E}(\rho)=\int V(x)\rho(x) dx,
\end{equation*}
where $V\in C^{2}(\Omega)$ is a second order differentiable function. Then 
\begin{equation*}
\mathrm{Hess}_g\mathcal{E}(\rho)(\sigma_1, \sigma_2)=\int \nabla_{xx}^2V(x)(\nabla f_1(x), \nabla f_2(x))\rho(x)dx.
\end{equation*}
\end{example}

\begin{example}[Interaction energy]\label{inter}
Consider 
\begin{equation*}
\mathcal{E}(\rho)=\int \int W(x,y)\rho(x)\rho(y)dxdy.
\end{equation*}
where $W(x,y)=W(y,x)\in C^{2}(\Omega\times\Omega)$ is a given kernel function. Then 
\begin{equation*}
\begin{split}
\mathrm{Hess}_g\mathcal{E}(\rho)(\sigma_1, \sigma_2)=&\int \int \nabla_x\nabla_y W(x,y) (\nabla_x f_1(x),\nabla_yf_2(y))\rho(x)\rho(y)dxdy\\
&+\int \nabla_{xx}^2W(x,y)(\nabla f_1(x), \nabla f_2(x))\rho(x)dx.
\end{split}
\end{equation*}
\end{example}

\begin{example}[Negative Boltzmann--Shannon Entropy]\label{ent}
Consider 
\begin{equation*}
\mathcal{E}(\rho)=-\mathcal{H}(\rho)=\int \rho(x)\log\rho(x) dx.
\end{equation*}
Then
\begin{equation*}
\mathrm{Hess}_g\mathcal{E}(\rho)(\sigma_1, \sigma_2)=\int \mathrm{tr}(\nabla^2 f_1(x):\nabla^2f_2(x)) \rho(x) dx.
\end{equation*}
\end{example}
In above examples, when $\mathcal{E}$ is chosen as the negative Boltzmann-Shannon entropy in Example \ref{ent}, then $(\nabla f_1, \nabla f_2)$, $\mathrm{tr}(\nabla^2 f_1\colon \nabla^2 f_2)$ are known as Gamma one, Gamma two operators, respectively. In other words, the optimal transport metric is the integration of Gamma one operator w.r.t density $\rho$, while the Hessian operator of entropy is the integration of Gamma two operator w.r.t density $\rho$. We remark that Examples \ref{linear}, \ref{inter}, \ref{ent} are analogs of the bi-linear forms in Corollary \ref{Col1}, where $\theta$ is chosen as a homogeneous of degree one function. By generalizing these facts on graphs, we formulate the proposed mean-field information matrices. 
\end{document}